\newtheorem{thm}{Theorem}[section]
\newtheorem{lem}[thm]{Lemma}
\theoremstyle{definition}
\theoremstyle{remark}
\def\cC{\mathcal C}
\begin{document}

%-------------------------------------------------------
\title[Configurations in the Euclidean plane associated to a system of equations]{Configurations in the Euclidean plane associated to a system of equations}

\author{Francesco Colangelo}

\address{
University of Basilicata\\
Italy}

\email{francesco.colangelo@unibas.it}

\date{}

\begin{abstract}
In the Euclidean plane ${\bf{E}}^2$, fix four pairwise distinct points
\begin{equation*}
%\label{eqA}
\begin{array}{ccc}
A=(a_1,a_2),\ B=(b_1,b_2),\ C=(c_1,c_2),\ D=(d_1,d_2),
\end{array}
\end{equation*}
together with four non-zero real numbers $k_A,k_B,k_C,k_D$. We show that System (*) consisting of the following four equations
in the unknowns $X=(x_1,x_2)$ and $Y=(y_1,y_2)$
\begin{equation*}
%\label{egy}
\frac{1}{\|X-T\|^2}  +\frac{1}{\|Y-T\|^2}=k_T, \quad T\in\{A,B,C,D\}
\end{equation*}
has finitely many solutions $(X,Y)$ (counting also those with complex coordinates) provided that both of the following two conditions are satisfied:
\begin{itemize}
\item[($i$)] no three of the fixed points $A,B,C,D$ are coplanar;
\item[($ii$)] no three of the four circles of center $T$ and radius $1/\sqrt{k_T}$ with
share a common point in ${\bf{E}}^2$.
\end{itemize}
Furthermore, we exhibit a configuration $ABCD$ showing that System (*) satisfying $(i)$ and $(ii)$ may have many real solutions $(X,Y)$.  
This result is the planar version of an analog problem in the Euclidean space arising from applications to genetics, investigated in the recent papers \cite{cif} and \cite{ak2024}.
\end{abstract}

\maketitle
\noindent\textbf{Keywords:} Euclidean Plane, Configuration, Real algebraic variety.\\
\textbf{Mathematics Subject Classifications:} 51M, 14A25, 92.
\section{Introduction}
Problems in higher dimensional Euclidean spaces often ask for the solution of a system of equations of the form $f_1(x_1,x_2\ldots,x_n)=0, \ldots, f_k(x_1,x_2\ldots,x_n)=0$ where the $f_i$'s are real functions depending on Euclidean distances in ${\mathbf{E}}^n$; see, for instance, \cite{Lib}. In most cases, polynomial systems occur in this way, i.e. the functions $f_i$ are polynomial with real coefficients. Nevertheless, systems with rational functions $f_i$ may also be of interest, especially in connection with applications to other research areas. In \cite{cif},  it was pointed out that, for the particular case of $n=3$, the following System (\ref{egy2}) arises from the study of inferring the 3D DNA structure of the diploid organism from partially phased data: In ${\mathbf{E}}^{n}$, for a fixed set $\mathcal{T}$ of $2n$ pairwise distinct points together with  a set $\{k_A,k_B,\ldots\}$ of $2n$ non-zero real numbers, System (\ref{egy2}) is in the unknowns $X$, $Y$ with $X,Y\in {\mathbf{E}}^n$ and consists of the $2n$ equations 
\begin{equation}
\label{egy2}\frac{1}{\|X-T\|^2}  +\frac{1}{\|Y-T\|^2}=k_T, \quad T\in \mathcal{T}
\end{equation}
% Since the above system has at least two (trivial) solutions, namely $(X^*,Y^*)$ and $(Y^*,X^*)$, the question %arises whether some more solutions might exist. 
Actually, System (\ref{egy2}) may have infinitely many solutions for some configurations of points. This was pointed out for $n=3$; see \cite{ak2024}. On the other hand, a ``sufficiently generic'' choice of $\mathcal{T}$ appears to ensure the finiteness of the solutions for System (\ref{egy2}). This was shown in \cite{cif} for $n=3$. A more precise result was proven in \cite{ak2024}, namely System (\ref{egy2}) for $n=3$ has finitely many solutions provided that the configuration of the points in  $\mathcal{T}=\{A,B,C,D,E,F\}$  %and the ubiquity of the points $X^*,Y^*$ with respect to %$\mathcal{T}$ 
satisfies the following two conditions:
\begin{itemize}
\item[(I)] no four of the fixed points $A,B,C,D,E,F$ are coplanar;
\item[(II)] no four of the six spheres of center $T\in\{A,B,C,D,E,F\}$ and radius $1/\sqrt{k_T}$ with
%\begin{equation}
%\label{kxy}
%k_T=\frac{1}{\|X^*-T\|^2} +\frac{1}{\|Y^*-T\|^2}
%\end{equation}
share a common point in ${\bf{E}}^3$.
\end{itemize}
In this paper we investigate the case $n=2$. Our main result is the following theorem which appears to be an analog of the above quoted result for $n=3$.  
\begin{thm}
\label{main}
In the Euclidean plane ${\bf{E}}^2$, fix four pairwise distinct points
\begin{equation*}
\label{eqA}
\begin{array}{ccc}
A=(a_1,a_2),\, B=(b_1,b_2),\, C=(c_1,c_2)\, D=(d_1,d_2)
\end{array}
\end{equation*}
together with four non-zero real numbers, $k_A,k_B,k_C,k_D$. Then System (*) consisting of the following four equations
in the unknowns $X=(x_1,x_2)$ and $Y=(y_1,y_2)$
\begin{equation}
\label{egy}
\frac{1}{\|X-T\|^2}  +\frac{1}{\|Y-T\|^2}=k_T, \quad T\in\{A,B,C,D\}
\end{equation}
has finitely many solutions $($counting also those with complex coordinates$)$ provided that both of the following two conditions are satisfied:
\begin{itemize}
\item[$(i)$] no three of the fixed points $A,B,C,D$ are coplanar;
\item[$(ii)$] no three of the four circles of center $T$ and radius $1/\sqrt{k_T}$ with
share a common point in ${\bf{E}}^2$.
\end{itemize}
\end{thm}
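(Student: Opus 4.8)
\emph{Throughout I read condition $(i)$ in its planar meaning: no three of $A,B,C,D$ are collinear.} The plan is to eliminate $Y$ and reduce System $(*)$ to a pair of plane curves in the variable $X$, and then to prove that these two curves share no common component. I work over $\mathbb C$ in order to count complex solutions, and I restrict to the open locus $U=\{(X,Y):\|X-T\|^2\neq0\neq\|Y-T\|^2\ \forall T\}$ on which all four fractions are defined; there a point is a solution of $(*)$ exactly when it is a common zero of the four \emph{polynomial} equations $p_T+q_T=k_Tp_Tq_T$, where $p_T:=\|X-T\|^2$ and $q_T:=\|Y-T\|^2$.

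First I would record two consequences of $(i)$. Writing $p_T=|X|^2-2\langle X,T\rangle+|T|^2$, the functions $p_A,p_B,p_C,p_D$ are affine in $(|X|^2,x_1,x_2)$ with the same coefficient of $|X|^2$; since any four points of the plane are affinely dependent, there are constants $c_A,c_B,c_C,c_D$, not all zero, with $\sum_Tc_T=0$ and $\sum_Tc_T\,T=0$, and $(i)$ forces every $c_T\neq0$ (a vanishing $c_T$ would make the remaining three points collinear). Hence
\begin{equation*}
\sum_T c_T\,\|Z-T\|^2=\sum_T c_T|T|^2=:C_0\qquad\text{for every }Z,
\end{equation*}
a single linear relation among the four squared distances of any point. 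Moreover the image $\cD\subset\mathbb A^4$ of $Z\mapsto(\|Z-T\|^2)_T$ is a surface: by $(i)$ the differences $q_A-q_B,\ q_A-q_C$ are invertible affine coordinates on the $Z$-plane, so $\cD$ is cut out by the above linear relation together with one quadratic equation $q_A=\Phi(q_A-q_B,q_A-q_C)$, and a point of $\mathbb A^4$ lies on $\cD$ iff it is the squared-distance vector of a \emph{unique} point $Z$.

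Next comes the elimination. On $U$ the $T$-th equation reads $\tfrac1{p_T}+\tfrac1{q_T}=k_T$; if $k_Tp_T-1=0$ it would force $\tfrac1{q_T}=0$, which is impossible, so every solution has $k_Tp_T\neq1$ and
\begin{equation*}
q_T=\widehat q_T:=\frac{p_T}{k_Tp_T-1}\qquad(T\in\{A,B,C,D\}).
\end{equation*}
Thus a solution in $U$ is the same datum as a point $X$ (with $p_T\neq0$ and $k_Tp_T\neq1$) for which $(\widehat q_A,\widehat q_B,\widehat q_C,\widehat q_D)\in\cD$, the matching $Y$ being then uniquely determined by $(i)$. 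Imposing the two defining equations of $\cD$ on the $\widehat q_T$ and clearing the denominators $\prod_S(k_Sp_S-1)$ yields two polynomials $F_1(x_1,x_2),F_2(x_1,x_2)$, and the solutions of $(*)$ in $U$ inject into $\{F_1=0\}\cap\{F_2=0\}$. It therefore suffices to prove that $F_1$ and $F_2$ are coprime in $\mathbb C[x_1,x_2]$: Bézout then bounds the number of admissible $X$, and hence of pairs $(X,Y)$.

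This coprimality is the main obstacle, and it is exactly here that $(ii)$ must enter. Observe first that $k_Tp_T-1=0$ is the equation of the circle $\cC_T$ of center $T$ and radius $1/\sqrt{k_T}$, so the cleared denominators vanish precisely on these four circles. Suppose for contradiction that $F_1,F_2$ had a common irreducible component $\Gamma$. For generic $X\in\Gamma$ all $\widehat q_T$ are finite and $(\widehat q_T)_T\in\cD$, so $\Gamma$ carries a one–parameter family of solutions $X\mapsto(X,Y(X))$ with $Y(X)$ rational in $X$. The argument I would pursue is to analyse this family where $\Gamma$ meets the circles: as $X$ approaches a point of $\cC_T$ with $p_T\not\to0$ one has $\widehat q_T\to\infty$, forcing $Y(X)\to\infty$; tracking which circles $\Gamma$ must cross, combined with the symmetry $X\leftrightarrow Y$ of $(*)$ (which makes $X\mapsto Y(X)$ a birational involution between $\Gamma$ and its companion $Y$-curve), is designed to force a common point onto three of the circles $\cC_T$ and thereby contradict $(ii)$, with $(i)$ being what keeps all $c_T\neq0$ and $Y(X)$ well defined. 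Equivalently, one may phrase the entire reduction as the non-vanishing of a single resultant in the data $(A,B,C,D,k_A,\dots,k_D)$ and show that this resultant vanishes only when three points become collinear or three circles concur; verifying that its degenerate locus is exactly $\{(i)\ \text{fails}\}\cup\{(ii)\ \text{fails}\}$ is the computational heart of the proof.
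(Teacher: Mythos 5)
Your reduction framework is sound and in fact mirrors the paper's with the roles of $X$ and $Y$ swapped: your linear relation $\sum_T c_T\|Z-T\|^2=C_0$ is the paper's determinantal identity (\ref{eqE071224}), your quadratic equation cutting out $\mathcal{D}$ is its (\ref{eqG071224}), and the unique recovery of the partner point from condition $(i)$ is Lemma \ref{lemA18apr}. But the proof has a genuine gap at exactly the step you flag as ``the main obstacle'': the coprimality of $F_1,F_2$, equivalently the non-existence of a curve $\Gamma$ of solutions, is only announced as a plan (``the argument I would pursue \dots is designed to force \dots''), not carried out, and the sketch as stated would not go through. The decisive difficulty is at infinity: over $\mathbb{C}$ every circle $\mathcal{C}_T$ passes through the two circular points $(1:\pm i:0)$, so your strategy of ``tracking which circles $\Gamma$ must cross'' can fail entirely --- the projective closure of $\Gamma$ may meet all four circles \emph{only} at the circular points, producing no affine point on three circles and hence no contradiction with $(ii)$. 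This is precisely why the paper needs a separate, delicate power-series computation for branches centered at infinity (Lemma \ref{lemH071224}): expanding $(\Gamma_A(Y)+\|A\|^2)^{-1}$ along such a branch yields a relation $a_1\eta_1+a_2\eta_2=\kappa$ with $\kappa$ independent of the chosen center $T$, and combining it for two centers with $\eta_1^2+\eta_2^2=0$ forces two of the fixed points to coincide. Nothing in your proposal substitutes for this computation.

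Even in the affine regime your sketch is incomplete in two ways that the paper must handle separately. First, a common point of three circles obtained from a pole of three of the functions $\widehat q_T$ may be \emph{complex}, and then $(ii)$ (a statement about ${\bf E}^2$) says nothing; the paper rules this out by a conjugation argument --- the conjugate point also lies on the three real circles, forcing the three centers to be collinear --- so it is $(i)$, not $(ii)$, that kills the complex case. Second, the linear relation only forces a pole of one function to propagate to \emph{at least one} other, so one must also exclude a branch that is a pole of exactly two of the four functions; the paper does this by a non-obvious trick (Lemma \ref{lem17apr}): extracting the coefficient of $\tau^{-2i}$ from the quadratic relation (\ref{eqF071224}) yields $(b_1-c_1)^2+(b_2-c_2)^2=0$, contradicting that $B\ne C$ are real. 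Your fallback --- ``phrase the entire reduction as the non-vanishing of a single resultant'' whose degenerate locus is exactly where $(i)$ or $(ii)$ fails --- is an unverified assertion, and there is no a priori reason the resultant's vanishing locus coincides with that union. So while your Bézout/coprimality framing is a legitimate (and arguably cleaner) way to organize the finiteness claim, the central content of the theorem --- the pole analysis on a hypothetical solution curve, including its behavior at the circular points --- is missing.
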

In Section \ref{exampl}  we exhibit a configuration $ABCD$ together with four real numbers $k_A,k_B,k_C,k_D$ for which System (*) satisfies $(i)$ and $(ii)$ and has several real solutions $(X,Y)$. 
We also exhibit a System (*) with infinitely many solutions such that $(i)$ is not satisfied, but $(ii)$ it is; see Section  \ref{seex2}.  
\section{Backgrounds on plane algebraic curves}
Let $\cC$ be an irreducible, complex, plane curve of degrre $\geq 2$. For the study of $\cC$, we rely on concepts and results concerning its branches (places), and the field $\mathbb{C}((\tau))$ of formal power series $\alpha(\tau)=\sum\limits_{i\ge \ell}^{\infty}u_i\tau^i$ in an indeterminate $\tau$. Here, $\mathbb{C}((\tau))$ is the rational field of the ring $\mathbb{C}[[\tau]]$ of all formal power series $\alpha(\tau)=\sum\limits_{i\ge 0}^{\infty}u_i\tau^i$.
Our notation and terminology are standard; see \cite{lefschetz2012algebraic,shafaverich2013basic,seidenberg1968}. In particular, $\alpha=0$ if and only if $u_i=0$ for every $i$, and for $\alpha\neq 0$, ${\rm{ord}}(\alpha(\tau))$ denotes the smallest $i$ with $u_i\neq 0$.

Every point $P\in \cC$ is the center of at least one branch of $\cC$. If $P$ is a non-singular point of $\cC$, then exactly one branch of $\cC$ is centered at $P$. For a singular point of $\cC$ there may be more then one but finitely many branches centered at that point. If $P=(\xi,\eta)=(\xi:\eta:1)\in\cC$ is an affine point, a primitive representation of a branch $\gamma$ of $\cC$ centered at $P$ is a pair $(x(\tau),y(\tau))$ so that 
\begin{equation}
\label{eqA20apr}
x(\tau)=\xi+\varphi_1(\tau),\,\quad y(\tau)=\eta+\varphi_2(\tau),
\end{equation}
with $\varphi_i(\tau)\in \mathbb{C}[[\tau]]$ where either  $\varphi_1(\tau)$, or $\varphi_2(\tau)$ is not the zero of $\mathbb{C}[[\tau]])$, i.e. it is not zero identically.

Let $P=(\eta_1:\eta_2:0)$ be a point of $\cC$ at infinity. A primitive branch representation gives a branch of $\cC$ centered at $P$, say 
$(x(\tau):y(\tau):1)$ with $x(\tau),y(\tau)\in \mathbb{C}((\tau))$ of the form
$$x(\tau)=\frac{\eta_1+\varphi_1(\tau)}{\alpha(\tau)},\quad y(\tau)=\frac{\eta_2+\varphi_2(\tau)}{\alpha(\tau)},$$  where $\varphi_i(\tau)\in \mathbb{C}[[\tau]]$, $\alpha(\tau)=t^m \beta(\tau)$ with $m\ge 1$ and $\min_i\{{\rm{ord}}(\varphi_i(\tau))\}<m$.

A branch representation is primitive (or irreducible) if the ${\rm{g.c.d.}}$ of all exponents of $\tau$ in the power series $\varphi_i(\tau)$ is equal to $1$.

Let $M(Y)$ be a polynomial in $Y=(y_1,y_2)$. Then $M(Y)$ is said to vanish at a branch of $\cC$ given by a primitive representation $(x(\tau),y(\tau))$ if $M(x(\tau),y(\tau))$ is the zero power series. If $M(Y)$ vanishes at some branch of $\cC$, it vanishes at every branch of $\cC$ as $\cC$ is irreducible.
Let $L(Y)=L_1(Y)/L_2(Y)$ be a complex rational function with polynomials $L_1(Y),\ L_2(Y)$ which do not vanishes at any branch of $\cC$.
For a branch $\gamma$  of $\cC$ given by a primitive representation $(x(\tau),y(\tau))$, let $L_j(\tau)=L_j(x(\tau),y(\tau))$ with $j=1,2$. Then $L_j(\tau)\in \mathbb{C}((\tau))$, and $\gamma$ is a pole of $L(Y)$ if ${\rm{ord}}(L_1(\tau))<{\rm{ord}}(L_2(\tau))$, and it is a zero of $L(Y)$ if
${\rm{ord}}(L_1(\tau))> {\rm{ord}}(L_2(\tau))$.
By a classical result, see \cite[Theorem 14.1]{seidenberg1968}, every complex rational function $L$
has only finitely many poles, and $L$ has some
poles, unless $L$ is a (non-zero) constant.

%\subsubsection{Third Example} We show a System (*) with infinitely many solutions that does not satisfy Condition (i) in Theorem \ref{main}. Take A=(0,0),\,B=(0,-1),\,C=(0,1),\,D=(1,0), together with X^*=\Big(-\sqrt{\frac{2}{11}(5-\sqrt{14})},0\Big),\quad Y^*=\Big(\sqrt{\frac{2}{11}(5-%\sqrt{14})},0\Big).
%\label{thex}

\section{Outline of the proof} For any solution $(x_1,x_2,y_1,y_2)$ of System (*), we let $X_P=(x_1,x_2)$ and $Y_P=(y_1,y_2)$, and say that $(X_P,Y_P)$ is a solution of System (*).

For $T\in \{A,B,C,D\}$ and indeterminate $Y=(y_1,y_2)$, %with $k_T\|Y-T\|^2\ne 1$,
we introduce the rational function
\begin{equation}
\label{eq241} \Gamma_T(Y)=\frac{\|Y-T\|^2}{k_T\|Y-T\|^2-1}-\|T\|^2.
\end{equation}
With this notation,  System (*) can be rewritten as
\begin{equation}
\label{eq242} \|X-T\|^2=\Gamma_T(Y)+\|T\|^2, \quad T\in\{A,B,C,D\}.
\end{equation}
This shows that if $(X_P,Y_P)$ is a real solution of System (*) then $X_P$ is the common point of the four circles with centers $T$ and radius $\Gamma_T(Y)+\|T\|^2$. Furthermore, if $(X_P,Y_P)$ and
$(X_Q,Y_Q)$ are two real solutions of System (*) such that $X_P\neq X_Q$ but $\Gamma_T(Y_P)=\Gamma_T(Y_Q)$ for at least three $T$'s, say $T\in \{A,B,C\}$ then $A,B,C$ are collinear. This follows from the fact that, in this case, the three circles of equation $\|X-T\|^2=\Gamma_T(Y_P)+\|T\|^2$  with $T\in\{A,B,C\}$ have two distinct common points, namely $X_P$ and $X_Q$, and hence their centers are collinear. Therefore, an approach to the proof of Theorem \ref{main} can rely upon the study of $\Gamma_T(Y)$. We show that this approach, which was introduced and worked out in \cite{ak2024}, is also adequate for our case. In fact, if System (*) is supposed to have infinitely many solutions, then we are able to prove that $\Gamma_T(Y)$ is constant for infinitely many $Y_P$ such that $(X_P,Y_P)$ is a solution of System (*) for some $X_P$, provided that both $(i)$ and $(ii)$ hold. Therefore, the finiteness of the solutions of System (*) holds under our two assumptions $(i)$ and $(ii)$, as claimed in Theorem \ref{main}.

%Our final remark is that assumption $(i)$ cannot be dropped as the following case shows; see Section~\ref{exse}.

\section{Equations linking $X_P$ and $Y_P$}
 \begin{lem}
\label{lemA18apr} Every solution $(X_P,Y_P)$ of System (*) is uniquely determined by $Y_P$.
\end{lem}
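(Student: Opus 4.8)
The plan is to recover $X_P$ from $Y_P$ by solving a linear system. Starting from the reformulation of System (*) recorded in the outline, namely $\|X-T\|^2=\Gamma_T(Y)+\|T\|^2$ for each $T\in\{A,B,C,D\}$, I would first observe that for a genuine solution $(X_P,Y_P)$ every denominator $k_T\|Y_P-T\|^2-1$ is nonzero: otherwise $1/\|X_P-T\|^2=k_T-1/\|Y_P-T\|^2$ would vanish, contradicting that $\|X_P-T\|^2$ is finite. Hence each $\Gamma_T(Y_P)$ is a well-defined scalar. Expanding $\|X-T\|^2=\|X\|^2-2(t_1x_1+t_2x_2)+\|T\|^2$ and cancelling $\|T\|^2$ gives, for each $T=(t_1,t_2)$, the relation
\[
\|X_P\|^2-2(t_1x_1+t_2x_2)=\Gamma_T(Y_P).
\]

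Next I would eliminate the quadratic term $\|X_P\|^2$ by subtracting the equation for $A$ from those for $B$ and for $C$, obtaining the two genuinely linear equations
\[
2\big((a_1-b_1)x_1+(a_2-b_2)x_2\big)=\Gamma_A(Y_P)-\Gamma_B(Y_P),
\]
\[
2\big((a_1-c_1)x_1+(a_2-c_2)x_2\big)=\Gamma_A(Y_P)-\Gamma_C(Y_P),
\]
in the unknowns $x_1,x_2$. The coefficient matrix has determinant $(a_1-b_1)(a_2-c_2)-(a_2-b_2)(a_1-c_1)$, which is twice the signed area of the triangle $ABC$ and therefore vanishes exactly when $A,B,C$ are collinear.

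Invoking hypothesis $(i)$, which forbids three collinear points among $A,B,C,D$, the points $A,B,C$ are not collinear, so this determinant is nonzero and Cramer's rule yields a single value of $(x_1,x_2)$; that is, $X_P$ is uniquely determined by $Y_P$. Since the determinant is a fixed nonzero real number depending only on the given points, the argument is valid over $\mathbb{C}$ as well, so it covers the complex solutions counted in Theorem \ref{main}.

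The computation is routine; the only points requiring care are the interpretation of $\|X-T\|^2$ as the polynomial $(x_1-t_1)^2+(x_2-t_2)^2$ when $X_P$ has complex coordinates, and the verification that the denominators in $\Gamma_T$ do not vanish at a solution, so that the linear system above is genuinely the one satisfied by $X_P$. I do not expect a serious obstacle beyond this bookkeeping, as the uniqueness reduces entirely to the invertibility of a $2\times2$ matrix built from the fixed real coordinates of $A,B,C$.
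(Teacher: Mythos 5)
Your proposal is correct and takes essentially the same route as the paper's proof: subtract the equation for $A$ from those for $B$ and $C$ to eliminate the quadratic term, then use the non-collinearity of $A,B,C$ from condition $(i)$ to get an invertible $2\times 2$ coefficient matrix and apply Cramer's rule, exactly as in the paper. (One harmless sign slip: subtracting the $A$-equation from the $B$- and $C$-equations gives right-hand sides $\Gamma_B(Y_P)-\Gamma_A(Y_P)$ and $\Gamma_C(Y_P)-\Gamma_A(Y_P)$, not their negatives, but this does not affect the uniqueness conclusion; your explicit check that the denominators $k_T\|Y_P-T\|^2-1$ are nonzero at a solution is a detail the paper leaves implicit.)
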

\begin{proof}
Subtract the first equation from each of the other three. Then the following linear system in the coordinates of $X$ is obtained:
\begin{equation}
\label{eq243} 2(A-T)\cdot X=\Gamma_T(Y)-\Gamma_A(Y), \quad T\in\{B,C,D\},
\end{equation}
where
$(A-T)\cdot X=2(a_1-t_1)x_1+2(a_2-t_2)x_2+2(a_3-t_3)x_3.$

Since $A,B,C$ are not collinear, the determinant
$$\Delta=\left|
  \begin{array}{cccc}
    a_1-b_1 & a_2-b_2   \\
    a_1-c_1 & a_2-c_2   \\
   \end{array}
\right|
$$
is not zero. Therefore, the classical Cramer's rule applies to the linear system
\begin{equation}
\label{eqB071224}
  \begin{array}{llll}
    2(a_1-b_1)x_1+ 2(a_2-b_2)x_2 = \Gamma_B(Y_P)-\Gamma_A(Y_P);\\
    2(a_1-c_1)x_1+ 2(a_2-c_2)x_2 =\Gamma_C(Y_P)-\Gamma_A(Y_P);\\
  \end{array}
\end{equation}
and it provides a formula for the coordinates of $X_P$ in functions $Y_P$ by using the functions $\Gamma_T(Y)$ with $T\in\{A,B,C,D\}$.
Thus, if $X_P=(x_1,x_2)$ then for $Y=Y_P$
\begin{equation}
\label{eqD071224}
x_1=\frac{1}{2\Delta} \left|\begin{array}{cccc}
\Gamma_B(Y)-\Gamma_A(Y)  & a_2-b_2  \\
\Gamma_C(Y)-\Gamma_A(Y) & a_2-c_2
  \end{array}\right|,
\quad
x_2=\frac{1}{2\Delta} \left|\begin{array}{ccccc}
a_1-b_1 &\Gamma_B(Y)-\Gamma_A(Y) \\
a_1-c_1 & \Gamma_C(Y)-\Gamma_A(Y)
  \end{array}\right|,
\end{equation}
whence the claim follows.
\end{proof}
\begin{lem} Let $(X_P,Y_P)$ be a solution of System (*). For $Y=Y_P$
\label{lemB071224}
\begin{equation}
\label{eqE071224}
\left| \begin{array}{cccccc}
  \Gamma_A(Y) & \Gamma_B(Y)& \Gamma_C(Y) & \Gamma_D(Y) \\
  a_1 & b_1 & c_1 & d_1  \\
  a_2 & b_2 & c_2 & d_2  \\
  1 & 1 & 1 & 1
\end{array}\right|=0.
\end{equation}
\end{lem}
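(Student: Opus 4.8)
The plan is to prove the identity by exhibiting the top row of the displayed matrix as an explicit linear combination of the three rows below it; the determinant then vanishes by the multilinearity and alternating property of the determinant. The decisive observation is that, once $\Gamma_T$ is evaluated at a solution $(X_P,Y_P)$, the resulting number $\Gamma_T(Y_P)$ depends on $T$ only through a fixed affine-linear function of the two coordinates of $T$, with coefficients that are the same for all four points.

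To make this precise I would begin from the reformulation (\ref{eq242}) of System (*), that is $\|X-T\|^2=\Gamma_T(Y)+\|T\|^2$, and expand the squared distance on the left for $T=(t_1,t_2)$:
\[
\|X-T\|^2=\|X\|^2-2(x_1t_1+x_2t_2)+\|T\|^2 .
\]
Comparing the two expressions for $\|X-T\|^2$ and cancelling the common summand $\|T\|^2$ gives, at $Y=Y_P$,
\[
\Gamma_T(Y_P)=\|X_P\|^2-2x_1t_1-2x_2t_2 .
\]
In other words, $\Gamma_T(Y_P)$ is the value at $(t_1,t_2)$ of a single affine function whose coefficients $-2x_1$, $-2x_2$, $\|X_P\|^2$ do not depend on the chosen point $T$.

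Specializing to $T\in\{A,B,C,D\}$, the top row of the matrix in (\ref{eqE071224}) becomes
\[
(\Gamma_A,\Gamma_B,\Gamma_C,\Gamma_D)=-2x_1\,(a_1,b_1,c_1,d_1)-2x_2\,(a_2,b_2,c_2,d_2)+\|X_P\|^2\,(1,1,1,1),
\]
which is exactly $-2x_1$ times the second row plus $-2x_2$ times the third row plus $\|X_P\|^2$ times the fourth row. Hence the four rows are linearly dependent and the determinant equals $0$, as claimed. The only point that requires attention is the cancellation of the term $\|T\|^2$: it is precisely this cancellation that removes the quadratic dependence on the coordinates of $T$ and leaves an affine-linear expression with $T$-independent coefficients, which is what forces the linear dependence of the rows. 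Everything beyond this is a routine application of the basic properties of determinants, and the hypotheses $(i)$ and $(ii)$ play no role here, being needed only in later arguments.
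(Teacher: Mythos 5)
Your proof is correct and follows essentially the same route as the paper: both deduce the vanishing of the determinant from a linear dependence forced by the fact that, at a solution, $\Gamma_T(Y_P)=\|X_P-T\|^2-\|T\|^2=\|X_P\|^2-2x_1t_1-2x_2t_2$ is affine in the coordinates of $T$ with $T$-independent coefficients. The paper reaches this dependence slightly more indirectly, by subtracting the equation for $A$ from those for $B,C,D$ (its system (\ref{eqC071224})) and invoking the consistency of three linear equations in the two unknowns $x_1,x_2$, whereas you exhibit the top row explicitly as $-2x_1$ times the second row, minus $2x_2$ times the third, plus $\|X_P\|^2$ times the fourth --- an equivalent, and if anything more transparent, rendering of the same argument.
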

\begin{proof}
From the proof of Lemma \ref{lemA18apr},
\begin{equation}
\label{eqC071224}
  \begin{array}{llll}
    2(a_1-b_1)x_1+ 2(a_2-b_2)x_2 = \Gamma_B(Y_P)-\Gamma_A(Y_P);\\
    2(a_1-c_1)x_1+ 2(a_2-c_2)x_2 =\Gamma_C(Y_P)-\Gamma_A(Y_P);\\
    2(a_1-d_1)x_1+ 2(a_2-d_2)x_2 =\Gamma_D(Y_P)-\Gamma_A(Y_P).
  \end{array}
\end{equation}
Therefore, the rows in the linear system (\ref{eqC071224}) are linearly dependent. This proves the claim.
\end{proof}
\begin{lem}Let $(X_P,Y_P)$ be a solution of System (*). For $Y=Y_P$
\label{lemC071224}\begin{equation}
\label{eqG071224}
\begin{array}{llll}
\vspace{0.2cm}
\Big(\left|\begin{array}{lllll}
\Gamma_B(Y)-\Gamma_A(Y)  & a_2-b_2 \\
\Gamma_C(Y)-\Gamma_A(Y) & a_2-c_2  \\
  \end{array}\right|
  -2\Delta a_1\Big)^2+
  \vspace{0.2cm}
\Big(\left|
\begin{array}{lllll}
a_1-b_1 &\Gamma_B(Y)-\Gamma_A(Y)\\
a_1-c_1 & \Gamma_C(Y)-\Gamma_A(Y) \\
\end{array}\right|
  -2\Delta a_2\Big)^2-\\
\vspace{0.2cm}
  4\Delta^2(\Gamma_A(Y)+\|A\|^2)=0.
\end{array}
\end{equation}
\end{lem}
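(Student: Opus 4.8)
The plan is to read off the coordinates of $X_P$ from the Cramer's-rule formulas (\ref{eqD071224}) established in the proof of Lemma~\ref{lemA18apr}, and then to recognise the left-hand side of (\ref{eqG071224}) as nothing but a rescaled form of the $T=A$ equation $\|X_P-A\|^2=\Gamma_A(Y_P)+\|A\|^2$ coming from the reformulation (\ref{eq242}) of System (*). In other words, the identity (\ref{eqG071224}) is the expansion of $\|X_P-A\|^2$, multiplied through by $4\Delta^2$, written out entirely in terms of the functions $\Gamma_T(Y)$.

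Concretely, the first step is to observe that the two $2\times 2$ determinants appearing in (\ref{eqG071224}) are precisely $2\Delta x_1$ and $2\Delta x_2$, by the very formulas (\ref{eqD071224}) with $Y=Y_P$. Consequently the two bracketed differences equal $2\Delta(x_1-a_1)$ and $2\Delta(x_2-a_2)$, and the sum of their squares is
\begin{equation*}
4\Delta^2\big[(x_1-a_1)^2+(x_2-a_2)^2\big]=4\Delta^2\,\|X_P-A\|^2.
\end{equation*}

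The second step is to substitute the $T=A$ instance of (\ref{eq242}), namely $\|X_P-A\|^2=\Gamma_A(Y)+\|A\|^2$ evaluated at $Y=Y_P$, into the previous display. The two squared terms then cancel exactly against the last summand $4\Delta^2(\Gamma_A(Y)+\|A\|^2)$, so the whole left-hand side of (\ref{eqG071224}) reduces to $0$, as claimed. The computation is entirely mechanical, so I do not anticipate any genuine obstacle; the only point requiring care is the bookkeeping in the first step, namely correctly matching each determinant to its coordinate $x_i$ and not interchanging them, since the two determinants differ only by the column in which the $\Gamma$-entries sit. Once that identification is made, the lemma follows immediately.
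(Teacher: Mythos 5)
Your proposal is correct and follows exactly the paper's own argument: the paper proves Lemma~\ref{lemC071224} by combining (\ref{eq241}) for $T=A$ (equivalently, the $T=A$ case of (\ref{eq242})) with the Cramer's-rule formulas (\ref{eqD071224}), which is precisely your identification of the two determinants with $2\Delta x_1$ and $2\Delta x_2$ followed by substitution of $\|X_P-A\|^2=\Gamma_A(Y_P)+\|A\|^2$. You merely spell out the mechanical expansion that the paper leaves implicit.
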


\begin{proof} The claim follows from (\ref{eq241}) for $T=A$ together with (\ref{eqD071224}).
\end{proof}
%\begin{lem}
%\label{lem091224} For $T\in\{A,B,C,D\}$ 
%$$\Gamma_A$$
%\end{lem}
\section{The variety associated to System (*)}
The solutions $(x_1,x_2,y_1,y_2)$ of System (*) satisfy the system of polynomial equations
\begin{equation}
\label{egybis}
k_T \|X-T\|^2\|Y-T\|^2-\|X-T\|^2-\|Y-T\|^2=0,\quad T\in\{A,B,C,D\}.
\end{equation}
Expanding (\ref{egybis}) gives
\begin{equation}
\label{egybisA}
\begin{array}{l}
     k_T (X_1^2+X_2^2-2t_1X_1-2t_2X_2+t_1^2+t_2^2)(Y_1^2+Y_2^2-2t_1Y_1-2t_2Y_2+t_1^2+t_2^2)-  \\
     (X_1^2+X_2^2-2t_1X_1-2t_2X_2+t_1^2+t_2^2)-(Y_1^2+Y_2^2-2t_1Y_1-2t_2Y_2+t_1^2+t_2^2)=0,\  T\in\{A,B,C,D\}.
\end{array}
\end{equation}

The points $P=(x_1,x_2,y_1,y_2)$ arising from the solutions of System (*) belong to the affine set (i.e. possible singular and/or reducible affine variety) $\mathfrak{V}$ in $\mathbb{R}^4$ and viewed as a variety in $\mathbb{C}^4$. The projective closure of $\mathfrak{V}$ with homogeneous coordinates $(X_1:X_2:Y_1:Y_2:Z)$ is the (possible singular and/or reducible) projective variety $\bar{\mathfrak{V}}$ of equation
\begin{equation}
\label{egybisAA}
\begin{array}{lll}
k_T (X_1^2+X_2^2-2t_1X_1Z-2t_2X_2Z+t_1^2Z^2+t_2^2Z^2)(Y_1^2+Y_2^2-2t_1Y_1Z-2t_2Y_2Z+t_1^2Z+t_2^2)Z-\\
\big((X_1^2+X_2^2-2t_1X_1Z-2t_2X_2Z+t_1^2Z+t_2^2Z)-(Y_1^2+Y_2^2-2t_1Y_1Z-2t_2Y_2Z+t_1^2Z+t_2^2Z))Z^2=0,
\end{array}
\end{equation}
where $T$ runs over $\{A,B,C,D\}$. In particular, the points of $\bar{\mathfrak{V}}$ at infinity $(\xi_1:\xi_2:\eta_1:\eta_2:0)$ are those with $(\xi_1^2+\xi_2^2)(\eta_1^2+\eta_2^2)=0$.
Let $\cC$ be any absolutely irreducible subvariety of $\mathfrak{V}$. Then its function field $\mathbb{C}(\cC)$ is generated by $x_1,x_2,y_1,y_2$, and the above Equations (\ref{eqD071224}), (\ref{eqE071224}) and (\ref{eqG071224}) hold for $X=(x_1,x_2)$ and $Y=(y_1,y_2)$.  In particular, Equation (\ref{eqD071224}) shows that $\dim(\cC)\le 2$. This gives the following result.
\begin{lem}
\label{lem071224} The dimension of $\mathfrak{V}$ is either zero, or one.
\end{lem}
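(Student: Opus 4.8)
The plan is to bound the dimension of each absolutely irreducible component $\cC$ of $\mathfrak{V}$ and then take the maximum. As already observed in the excerpt, Equation (\ref{eqD071224}) exhibits $x_1$ and $x_2$ as rational functions of $Y=(y_1,y_2)$, so $\mathbb{C}(\cC)$ is generated over $\mathbb{C}$ by $y_1,y_2$ alone; hence $\dim(\cC)=\operatorname{trdeg}_{\mathbb{C}}\mathbb{C}(\cC)\le 2$, with equality precisely when $y_1,y_2$ are algebraically independent over $\mathbb{C}$. The whole point is therefore to rule out this last possibility, and for this I would invoke Lemma \ref{lemB071224}: the determinant relation (\ref{eqE071224}) holds on $\cC$. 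If I can show that (\ref{eqE071224}), after clearing the denominators of the $\Gamma_T(Y)$, is a \emph{nonzero} polynomial relation between $y_1$ and $y_2$, then these two cannot be algebraically independent, which forces $\dim(\cC)\le 1$; since this holds for every component, $\dim(\mathfrak{V})\le 1$, that is, it is either zero or one.

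So the crux reduces to one nonvanishing statement: writing $D(Y)$ for the determinant on the left of (\ref{eqE071224}), I must check that $D(Y)$ is not the zero rational function. Expanding along the first row gives $D(Y)=\sum_{T}\varepsilon_T M_T\,\Gamma_T(Y)$, where $\varepsilon_T\in\{+1,-1\}$ and $M_T$ is the $3\times 3$ minor formed by the coordinate rows of the three points other than $T$; each such minor is (twice) the signed area of the triangle on those three points, so condition $(i)$—no three of $A,B,C,D$ collinear—guarantees $M_T\neq 0$ for every $T$. Next I would use the identity $\Gamma_T(Y)=c_T+\dfrac{1}{k_T\bigl(k_T\|Y-T\|^2-1\bigr)}$ with $c_T=\tfrac{1}{k_T}-\|T\|^2$ a constant, so that $D(Y)$ equals a constant plus four terms, the $T$-th of which has a simple pole along the conic $S_T\colon k_T\|Y-T\|^2=1$ with nonzero coefficient $\varepsilon_T M_T/k_T$. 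Note that each $S_T$ is irreducible, being a non-degenerate circle since $k_T\neq 0$.

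The key step is then to argue that these polar parts cannot cancel. The four conics $S_A,S_B,S_C,S_D$ are pairwise distinct, because each $S_T$ determines its center $T$ and the four centers are pairwise distinct points. Concretely, if $D(Y)\equiv 0$ I would multiply through by $\prod_T\bigl(k_T\|Y-T\|^2-1\bigr)$ and reduce the resulting polynomial identity modulo $k_A\|Y-A\|^2-1$: every term except the one indexed by $A$ is divisible by this factor and so vanishes, leaving $\varepsilon_A\tfrac{M_A}{k_A}\prod_{T\neq A}\bigl(k_T\|Y-T\|^2-1\bigr)\equiv 0$ on $S_A$. As $M_A/k_A\neq 0$ and $S_A$ is irreducible, one of the remaining factors would have to vanish on $S_A$, i.e. $S_T=S_A$ for some $T\neq A$, contradicting the distinctness of the centers. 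Hence $D(Y)\not\equiv 0$, which completes the argument.

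I expect the nonvanishing of $D(Y)$ to be the only genuine obstacle; it is exactly where hypothesis $(i)$ enters, through $M_T\neq 0$, and where the distinctness of the four centers is used. Two minor points deserve care but cause no trouble: first, a component $\cC$ whose $Y$-image is contained in one of the conics $S_T$ is automatically at most one-dimensional, so the possible vanishing of a denominator of some $\Gamma_T$ along $\cC$ never obstructs the conclusion; second, condition $(ii)$ is \emph{not} needed for this lemma—only $(i)$ and the distinctness of the points are used—so $(ii)$ is reserved for the later refinement that excludes the remaining one-dimensional components.
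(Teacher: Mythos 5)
Your proposal is correct, and it actually proves more than the paper's own argument records. The paper's proof of Lemma \ref{lem071224} is a single sentence: Equation (\ref{eqD071224}) exhibits $x_1,x_2$ as rational functions of $y_1,y_2$, hence $\dim(\cC)\le 2$, and the lemma is then stated with the implicit understanding that the relation (\ref{eqE071224}) (or (\ref{eqG071224})) cuts the dimension down to one; the nontriviality of that relation---that the determinant $D(Y)$ is not identically zero as a rational function of $(y_1,y_2)$---is never verified in the paper. Your partial-fraction argument supplies exactly this missing step, and it is sound: the identity $\Gamma_T(Y)=c_T+\bigl(k_T(k_T\|Y-T\|^2-1)\bigr)^{-1}$ with $c_T=1/k_T-\|T\|^2$ is correct; each $S_T$ is irreducible over $\mathbb{C}$ because $1/k_T\neq 0$ (a complex circle splits into two isotropic lines only when its radius vanishes); the $S_T$ are pairwise distinct because an irreducible conic determines its defining polynomial up to a scalar, hence its center; and reducing modulo $k_A\|Y-A\|^2-1$ isolates the $A$-term with coefficient $\varepsilon_A M_A/k_A\neq 0$ by condition $(i)$, so the polar parts cannot cancel. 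One small remark: your fallback case (a component whose $Y$-image lies in some $S_T$) is in fact vacuous, since on $\mathfrak{V}$ the defining identity gives $\|X-T\|^2(k_T\|Y-T\|^2-1)=\|Y-T\|^2$, so $k_T\|Y-T\|^2=1$ would force $\|Y-T\|^2=0$, which is inconsistent; thus the denominators of the $\Gamma_T$ never vanish on $\mathfrak{V}$ and no separate case is needed---which is just as well, since your one-dimensionality claim for that case would otherwise require an argument that the fibers over the $Y$-image are finite, which you did not give. You are also right that $(ii)$ plays no role here; the paper reserves it for the subsequent analysis of the case $\dim(\mathfrak{V})=1$.
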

%\begin{proof} Let $\mathbb{C}(\mathfrak{V})$ be the function field associated to $\mathfrak{V}$. Since $\mathbb{C}(\mathfrak{V})$ is generated by $X_1,X_2,Y_1,Y_2$ over $\mathbb{C}$, the computation made in the proof of Lemma \ref{lemA18apr}, in particular Equation (\ref{eqD071224}), show that $\mathbb{C}(\mathfrak{V})$ is generated by $Y_1,Y_2$ over $\mathbb{C}$. Therefore, $\mathfrak{V}$ has dimension $\dim(\mathfrak{V})$ at most one.
%\end{proof}
\subsection{Case $\dim(\mathfrak{V})=1$}
In this case, $\mathfrak{V}$ is a plane curve with affine equation $f(Y_1,Y_2)=0$. Assume that System (*) has infinitely many solutions in $\mathbb{C}$. Since $\mathfrak{V}$ has finitely many absolutely irreducible components and each zero-dimensional component has finitely many points, there exists an absolutely irreducible component $\cC$ of $\mathfrak{V}$ containing infinitely many points. Let $\mathbb{C}(\cC)$ be the function field of $\cC$. Then Equations (\ref{eqE071224}) and (\ref{eqG071224}) hold for $Y=(y_1,y_2)$. Moreover, the projective closure of $\cC$ has only two points at infinity, namely $P^{+}=(1:i:0)$ and $P^{-}=(1:-i:0)$ where $i^2+1=0$.
\begin{lem}
\label{lem20apr} For $T\in\{A,B,C,D\},$ neither $\|Y-T\|^2$ nor $k_T\|Y-T\|^2-1$ vanishes at each branch of $\cC.$
\end{lem}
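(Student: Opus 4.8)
The plan is to translate the branch-theoretic statement into the language of the function field and then dispose of the two assertions separately. By the remark preceding the lemma, a polynomial $M(Y)$ vanishes at some branch of the irreducible curve $\cC$ if and only if it vanishes at every branch, which happens precisely when $M(x(\tau),y(\tau))$ is the zero series, i.e. when $M$ is the zero element of the function field $\mathbb{C}(\cC)$. So it suffices to show that the two elements $\|Y-T\|^2=(y_1-t_1)^2+(y_2-t_2)^2$ and $k_T\|Y-T\|^2-1$ of $\mathbb{C}(\cC)$ are both nonzero. Throughout I will use that, by Lemma \ref{lemA18apr} and formula (\ref{eqD071224}), the functions $x_1,x_2$ are rational in $y_1,y_2$, so that $\mathbb{C}(\cC)=\mathbb{C}(y_1,y_2)$; since $\dim\cC=1$ this is a proper extension of $\mathbb{C}$, and hence $Y$ is not constant on $\cC$.

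For the second function the argument is purely formal. Suppose, for a contradiction, that $k_T\|Y-T\|^2-1=0$ in $\mathbb{C}(\cC)$, that is $\|Y-T\|^2=1/k_T$. The polynomial (\ref{egybis}) associated with this same $T$ vanishes identically on $\cC$; substituting $\|Y-T\|^2=1/k_T$ into
\[ k_T\|X-T\|^2\,\|Y-T\|^2-\|X-T\|^2-\|Y-T\|^2 \]
makes the first term equal to $\|X-T\|^2$, which cancels the second term and leaves $-1/k_T$. Thus $-1/k_T=0$ in $\mathbb{C}(\cC)$, which is absurd because $k_T$ is a nonzero constant. Hence $k_T\|Y-T\|^2-1\neq 0$ in $\mathbb{C}(\cC)$.

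For the first function I would argue through the two points at infinity recorded above. Suppose $\|Y-T\|^2=0$ in $\mathbb{C}(\cC)$. Since $\mathbb{C}(\cC)$ is a field, hence an integral domain, the factorization
\[ \|Y-T\|^2=\bigl(y_1-t_1-i(y_2-t_2)\bigr)\bigl(y_1-t_1+i(y_2-t_2)\bigr) \]
forces one of the two isotropic factors to vanish, say $y_1=iy_2+c$ with $c=t_1-it_2$ constant (the other case is entirely symmetric). This is a linear relation between the nonconstant coordinates, so $\cC$, whose function field is $\mathbb{C}(y_1,y_2)$, is birational to the line $Y_1-iY_2-cZ=0$, which has the single point at infinity $(1:-i:0)=P^{-}$ (respectively $(1:i:0)=P^{+}$ in the symmetric case). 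This contradicts the fact that the projective closure of $\cC$ carries the two distinct points $P^{+}$ and $P^{-}$ at infinity. Therefore $\|Y-T\|^2\neq 0$ in $\mathbb{C}(\cC)$, and the lemma follows.

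The delicate point, and the step I would write out most carefully, is this last one: one must legitimately pass from ``$\|Y-T\|^2$ is the zero function on $\cC$'' to ``$\cC$ is an isotropic line through $T$'', and then use the two-points-at-infinity description to reach the contradiction; the cancellation argument for $k_T\|Y-T\|^2-1$ is by comparison immediate. I expect the only genuine care needed is in handling the image of $\cC$ in the $Y$-plane as a genuine line rather than a point, which is guaranteed by $Y$ being nonconstant.
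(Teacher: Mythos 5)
Your treatment of the second function is correct, and it is a genuinely different route from the paper's: you derive the non-vanishing of $k_T\|Y-T\|^2-1$ purely formally, by substituting the supposed relation into the defining identity (\ref{egybis}), which holds in $\mathbb{C}(\cC)$ because $\cC\subseteq\mathfrak{V}$, and cancelling to get $-1/k_T=0$. The paper instead evaluates a primitive branch representation at its center $Y_P$, where $Y_P$ is an affine point of $\cC$ such that $(X_P,Y_P)$ is an actual solution of System (*), and observes that $k_T\|Y_P-T\|^2=1$ would make the $T$-th equation of (*) unsatisfiable. Your version is sound and even slightly stronger (it shows no point of $\mathfrak{V}$ whatsoever satisfies $k_T\|Y-T\|^2=1$), and it avoids appealing to the solution hypothesis for this half.

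The gap is in the $\|Y-T\|^2$ case. Your reduction is fine up to the conclusion that $\cC$ is an isotropic line $\ell$ through $T$, but the contradiction you then draw does not stand. The assertion in the paper that the projective closure of $\cC$ has the two points $P^{+}$ and $P^{-}$ at infinity is stated without proof, and the content that actually follows from (\ref{egybisAA}) is only a containment: the points at infinity of $\cC$ lie \emph{among} $\{P^{+},P^{-}\}$. An isotropic line $y_1-t_1=\pm i(y_2-t_2)$ has exactly one point at infinity, and that point \emph{is} $P^{+}$ or $P^{-}$, so it is entirely consistent with the containment; indeed, whether both $P^{+}$ and $P^{-}$ lie on $\bar{\cC}$ is precisely what fails when $\cC$ is such a line, so invoking the two-point statement to exclude it begs the question. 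Note also that the formal cancellation from your second part does not transfer: substituting $\|Y-T\|^2=0$ into (\ref{egybis}) merely forces $\|X-T\|^2=0$ as well, with no contradiction, which is why some extra input is unavoidable here. The input the paper uses is the defining property of $\cC$ in this case, namely that it contains affine points $Y_P$ for which $(X_P,Y_P)$ is a genuine solution of System (*): if $\|Y-T\|^2$ vanished at every branch, then evaluating a primitive branch representation centered at such a $Y_P$ at $\tau=0$ gives $\|Y_P-T\|^2=0$, making the term $1/\|Y_P-T\|^2$ in the $T$-th equation of (*) meaningless, a contradiction. Your argument never uses this hypothesis in the first case, and without it the facts you cite do not rule out the isotropic line.
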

\begin{proof}  By hypothesis, there exists $Y_P=(\eta_1:\eta_2:1)\in \cC$ such that $(X_P,Y_P)$ is a solution of System (*). Let $\gamma$ be a branch of $\cC$ centered at $Y_P$. Since $Y_P$ is an affine point, $\gamma$ has a primitive branch representation (\ref{eqA20apr}).
Assume on the contrary $k_T\|Y-T\|^2=1$ at every branch of $\cC$. Then $k_T((x(\tau)-t_1)^2+(y(\tau)-t_2)^2)=1$ whence
$k_T((\eta_1-t_1)^2+(\eta_2-t_2)^2+\psi(\tau))=1$ where $\psi\in \mathbb{C}[[\tau]]$ with ${\rm{ord}}(\psi(\tau))>0$. Therefore, $k_T((\eta_1-t_1)^2+(\eta_2-t_2)^2)=1$, that is, $k_T(\|Y_P-T\|^2)-1=0$. But then there exists no $X_P$ such that $(X_P,Y_P)$ is a solution of System (*), a contradiction. An analog proof works for the case $\|Y-T\|^2=0$.
\end{proof}
By Lemma \ref{lem20apr}, $\Gamma_T(Y)$ is a non-zero element of $\mathbb{C}(\cC)$.
\begin{lem}
\label{lemE071224}$\Gamma_T(Y)$ does not vanishes at any branch of $\cC$ for $T\in\{A,B,C,D\}$.
\end{lem}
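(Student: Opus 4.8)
The plan is to read the statement through the reformulation (\ref{eq242}): as an element of the function field $\mathbb{C}(\cC)$ one has $\Gamma_T(Y)=\|X-T\|^2-\|T\|^2$, where the coordinates of $X=X(Y)$ are the rational functions furnished by Cramer's rule in (\ref{eqD071224}). Writing $\Gamma_T=N_T/D_T$ with $N_T=(1-k_T\|T\|^2)\,\|Y-T\|^2+\|T\|^2$ and $D_T=k_T\|Y-T\|^2-1$, Lemma \ref{lem20apr} ensures that $D_T$ is not identically zero on $\cC$, so $\Gamma_T$ is a genuine nonzero element of $\mathbb{C}(\cC)$ and the notions of zero and pole at a branch are meaningful. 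A branch $\gamma$ is a zero of $\Gamma_T$ exactly when $\mathrm{ord}_\gamma(N_T)>\mathrm{ord}_\gamma(D_T)$, which by (\ref{eq242}) means that along $\gamma$ the determined point $X(Y)$ approaches the circle $\|X-T\|^2=\|T\|^2$, i.e. the circle of centre $T$ through the origin. Since I want to exclude every such $\gamma$, the classical finiteness result \cite[Theorem 14.1]{seidenberg1968} is the right instrument: there are only finitely many candidate branches to inspect, and ruling them all out is equivalent to saying that $1/\Gamma_T$ has no pole.

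I would split the verification according to the centre of $\gamma$. For an affine centre $Y_P=(\eta_1,\eta_2)$ I would use a primitive representation (\ref{eqA20apr}) to read off the constant term, so that a zero of $\Gamma_T$ at $\gamma$ yields $\|X_P-T\|^2=\|T\|^2$ for the solution $(X_P,Y_P)$ attached to $Y_P$ by Lemma \ref{lemA18apr}. The content is then to feed this incidence back into the other three equations of System (*): since $X_P$ is simultaneously the common point of the four circles $\|X-T'\|^2=\Gamma_{T'}(Y_P)+\|T'\|^2$, the extra condition that the $T$-circle also pass through the origin has to be shown incompatible with $(i)$ and $(ii)$; here the non-collinearity of the centres and the absence of a common point of three of the circles are exactly what prevent the coincidence of intersection points that such a zero would manufacture. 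For the only two points of $\cC$ at infinity, namely $P^{+}=(1:i:0)$ and $P^{-}=(1:-i:0)$, I would instead compute $\mathrm{ord}_\gamma(N_T)$ and $\mathrm{ord}_\gamma(D_T)$ from an explicit local parametrisation: because the quadratic part $y_1^2+y_2^2$ of $\|Y-T\|^2$ degenerates along the isotropic directions $y_2=\pm i\,y_1$, one finds $\|Y-T\|^2\to\infty$ along these branches, whence $\Gamma_T$ tends to the finite value $1/k_T-\|T\|^2$, so that $P^{+}$ and $P^{-}$ are neither zeros nor poles of $\Gamma_T$.

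I expect the affine case to be the main obstacle. Taken in isolation, the single equation attached to $T$ does not forbid a solution with $\|X_P-T\|^2=\|T\|^2$, so the contradiction must be produced by the simultaneous presence of all four circle-equations together with both hypotheses; pinning $X_P$ down through (\ref{eqD071224}) as the unique common point of three circles with non-collinear centres and then using $(ii)$ to exclude the additional incidence is where the real work lies. A secondary, but genuine, point to watch is the degenerate sub-case $k_T\|T\|^2=1$, in which the limiting value $1/k_T-\|T\|^2$ at $P^{\pm}$ vanishes; this says precisely that the origin lies on the circle of centre $T$ and radius $1/\sqrt{k_T}$, and by $(ii)$ it can occur for at most two of the four centres, so a finer order count at $P^{\pm}$, again controlled by $(ii)$, is needed to close these cases. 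Once both cases are handled, the two together give that $\Gamma_T$ has no zero on $\cC$, as asserted.
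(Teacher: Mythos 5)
You have misread what the lemma asserts. In this paper ``vanishes at a branch'' has the precise meaning fixed in the Backgrounds section: a function vanishes at a branch with primitive representation $(y_1(\tau),y_2(\tau))$ when its pullback is the \emph{zero power series}; since $\cC$ is irreducible, the lemma therefore says that $\Gamma_T$ is not the zero element of $\mathbb{C}(\cC)$ --- the fact needed later to legitimise forming inverses such as $(\Gamma_T-\|T\|^2)^{-1}$ and speaking of zeros and poles at all. You instead set out to prove the much stronger assertion that no branch of $\cC$ is a \emph{zero} of $\Gamma_T$ in the order-comparison sense. That stronger statement, applied to $1/\Gamma_T$ together with \cite[Theorem 14.1]{seidenberg1968}, would already force $\Gamma_T$ to be constant, i.e.\ it would subsume Lemmas \ref{lem17apr}, \ref{lemH071224} and \ref{prop19apr}; it cannot be obtained by the local inspection you sketch, and your affine case indeed never closes. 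The incidence you extract there, $\|X_P-T\|^2=\|T\|^2$ (that $X_P$ lies on the circle of centre $T$ through the origin), is in no tension with $(i)$ or $(ii)$: those hypotheses concern collinearity of the centres and the circles of radius $1/\sqrt{k_T}$, never circles through the origin, and the origin is not even intrinsic to the problem (the paper translates it freely in the proof of Lemma \ref{lemH071224}). A solution at which the single value $\Gamma_T(Y_P)$ happens to be $0$ is perfectly admissible, so the contradiction you hope for --- which you candidly flag as ``where the real work lies'' --- does not exist; this is not a fillable gap but the wrong target.

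Your computation at $P^{\pm}$ is also wrong: along a branch centred at $(1:\pm i:0)$ one has $y_i(\tau)=(\eta_i+\varphi_i(\tau))/\alpha(\tau)$ with $\eta_1^2+\eta_2^2=0$, so the leading coefficient of $y_1^2+y_2^2$ cancels and $\|Y-T\|^2$ need \emph{not} tend to infinity; the isotropy is precisely why the limit $\Gamma_T\to 1/k_T-\|T\|^2$ can fail, not a step towards it, and controlling this cancellation is the entire content of the paper's Lemma \ref{lemH071224}. By contrast, the paper's own proof of the present lemma is a short algebraic argument with no case distinction and no use of $(i)$, $(ii)$ or Seidenberg: if the pullback of $\Gamma_T$ to a branch $\gamma$ is identically zero, then (\ref{eq241}) forces $(y_1(\tau)-t_1)^2+(y_2(\tau)-t_2)^2=0$ identically (strictly, the displayed deduction is the one for $\Gamma_T+\|T\|^2$; for $\Gamma_T$ itself one first gets $\|Y-T\|^2$ constant along $\gamma$ --- a small slip in the paper), hence $\gamma$ is a branch of the reducible conic $(Y_1-t_1)^2+(Y_2-t_2)^2=0$, and absolute irreducibility makes $\cC$ one of its isotropic linear components, on which $\|Y-T\|^2$ vanishes identically, against Lemma \ref{lem20apr}. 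So the intended proof is a two-line local computation, while your proposal pursues a different, stronger statement and supports it with an argument that fails both in the affine case and at infinity.
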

\begin{proof} Assume on the contrary that there exists a branch $\gamma$ of $\cC$ with center at a point $\eta_P=(\eta_1,\eta_2)$ such that $\Gamma_T(\gamma)=0$. Take a primitive branch representation of $\gamma$, say $(y_1(\tau)=\eta_1+\varphi_1(\tau),\ y_2(\tau)=\eta_2+\varphi_2(\tau))$ with $\varphi_1(\tau),\varphi_2(\tau)\in \mathbb{C}[[\tau]]$. Since $\Gamma_T(\gamma)=0$, (\ref{eq241}) yields $(y_1(\tau)-t_1)^2+(y_2(\tau)-t_2)^2=0$ in $\mathbb{C}[[\tau]$. This shows that $\gamma$ is also a branch of the reducible conic $\cC_2$ of equation $(Y_1-t_1)^2+(Y_2-t_2)^2=0$, and hence it is a branch of one of the linear components $\ell$ of $\cC_2$. Since $\cC$ is absolutely irreducible, this implies that $\cC=\ell$.
\end{proof}
\begin{lem}
\label{lem17apr} If the branch $\gamma$ of $\cC$ is centered at an affine point, then $\gamma$ is not a pole of $\Gamma_T(Y)$ with $T\in \{A,B,C,D\}$.
\end{lem}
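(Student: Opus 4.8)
The plan is to argue by contradiction. Suppose the affine branch $\gamma$, centered at $\eta_P=(\eta_1,\eta_2)$, is a pole of $\Gamma_T$ for some $T\in\{A,B,C,D\}$; I will deduce that at least three of the four circles $C_T\colon k_T\|Y-T\|^2=1$ pass through $\eta_P$, and then rule this out using $(i)$ and $(ii)$.

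First I would record what a pole at an affine branch means. Write $\Gamma_T=N_T/D_T$ with $D_T=k_T\|Y-T\|^2-1$ and $N_T=(1-k_T\|T\|^2)\|Y-T\|^2+\|T\|^2$. By Lemmas \ref{lem20apr} and \ref{lemE071224}, neither $N_T$ nor $D_T$ vanishes at $\gamma$, so the notion of pole recalled earlier applies to $\Gamma_T$. Since $\gamma$ is centered at an affine point, its primitive representation $(y_1(\tau),y_2(\tau))$ has $y_1,y_2\in\mathbb{C}[[\tau]]$, whence $N_T(\tau),D_T(\tau)\in\mathbb{C}[[\tau]]$ have non-negative order. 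A pole requires ${\rm ord}(N_T)<{\rm ord}(D_T)$, forcing ${\rm ord}(D_T)\ge 1$, i.e. $D_T(\eta_P)=0$; since $N_T(\eta_P)=1/k_T\neq 0$, this shows that $\gamma$ is a pole of $\Gamma_T$ precisely when $\eta_P\in C_T$.

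The crucial step is to show that one pole propagates to at least three of the $\Gamma_T$. For this I would use the identity $\Gamma_T=\|X\|^2-2\,T\cdot X$: it follows from (\ref{eq242}) and holds throughout $\mathbb{C}(\cC)$ because both sides are rational functions on $\cC$ agreeing at the infinitely many solution points, $X$ being expressed through $Y$ by (\ref{eqD071224}). The quadratic part $\|X\|^2$ is common to all four $T$, only the linear part $-2\,T\cdot X$ varying. Setting $w=x_1+ix_2$, $\bar w=x_1-ix_2$ and $\zeta_T=t_1+it_2$ rewrites this as $\Gamma_T=w\bar w-\bar\zeta_T\,w-\zeta_T\,\bar w$. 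A branch can be a pole of some $\Gamma_T$ only if $\min({\rm ord}_\gamma w,{\rm ord}_\gamma\bar w)<0$; comparing the order of the $T$-independent term $w\bar w$ with those of the $T$-dependent terms then shows that the leading pole coefficient of $\Gamma_T$ is nonzero for all but at most one value of $T$ (the exceptional $T$, if any, being the unique one whose $\zeta_T$ equals a prescribed scalar, or the origin). As $\zeta_A,\zeta_B,\zeta_C,\zeta_D$ are pairwise distinct, at least three of the $\Gamma_T$ have a pole at $\gamma$, and so by the first step at least three of the circles $C_T$ pass through $\eta_P$.

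It remains to contradict this. If $\eta_P$ is real, three circles through it contradict $(ii)$. If $\eta_P$ is not real, then, the $C_T$ having real coefficients, the conjugate $\bar\eta_P\neq\eta_P$ lies on the same three circles; two distinct common points force the three pairwise radical axes to coincide (each being the line through $\eta_P$ and $\bar\eta_P$), so the three centers are collinear, contradicting $(i)$. I expect the order bookkeeping of the third paragraph to be the main obstacle, in particular the degenerate subcase $u^2+v^2=0$ for the leading coefficients $u,v$ of $x_1,x_2$ (equivalently ${\rm ord}_\gamma w\neq{\rm ord}_\gamma\bar w$), where the dominant contribution to $\Gamma_T$ switches from $w\bar w$ to one of the linear terms; care is also needed so that the real versus non-real cases correctly invoke $(ii)$ and $(i)$.
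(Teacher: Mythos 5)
Your proposal is correct, and it reaches the contradiction by a genuinely different route than the paper. The paper's proof of Lemma \ref{lem17apr} runs in three stages: first, the linear relation (\ref{eqE071224}), whose cofactor coefficients are nonzero because no three of $A,B,C,D$ are collinear, shows that a pole of $\Gamma_A$ forces a pole of at least one further $\Gamma_T$; second, the circles-through-$\eta_P$ argument (identical to your closing step, with $(ii)$ handling a real center and the conjugation/collinearity trick handling a non-real one via $(i)$) shows that at most two of the four $\Gamma_T$ can have a pole at $\gamma$; third, the remaining case of exactly two poles, say $\Gamma_A$ and $\Gamma_D$, is excluded by substituting the branch into the quadratic relation (\ref{eqF071224}) and reading off the coefficient of $\tau^{-2i}$, which equals $(b_1-c_1)^2+(b_2-c_2)^2$ times the square of the leading coefficient of $\Gamma_D(\gamma)$ and so would force $B=C$. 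You instead work upstream of these eliminated identities: from (\ref{eq242}) and (\ref{eqD071224}) you get $\Gamma_T=w\bar{w}-\bar{\zeta}_T w-\zeta_T\bar{w}$ in $\mathbb{C}(\cC)$, and a single leading-term comparison shows directly that at least three of the $\Gamma_T$ have a pole at $\gamma$, the possible exceptional $T$ being unique because $T\mapsto\bar{\zeta}_T$ (equivalently $T\mapsto\zeta_T$) is injective on distinct real points; your case bookkeeping on $({\rm{ord}}_\gamma w,{\rm{ord}}_\gamma\bar{w})$, including the switch of dominant term when the orders differ and the degenerate possibility that $w$ or $\bar{w}$ vanishes identically on $\cC$, is sketched but checks out, since in every regime the dominant coefficient can be killed for at most one $T$. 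The two arguments share the same algebraic kernel --- your injectivity of $T\mapsto\bar{\zeta}_T$ is exactly the paper's nonvanishing of $(b_1-c_1)^2+(b_2-c_2)^2=(\zeta_B-\zeta_C)(\bar{\zeta}_B-\bar{\zeta}_C)$ --- but your organization buys a uniform treatment (one valuation analysis instead of two separate identity manipulations, and no need for the relation (\ref{eqF071224}) at all), while the paper's buys freedom from branch-order case analysis by leaning on explicit polynomial identities. Your first step is also a clean sharpening of the paper's: writing $\Gamma_T=N_T/D_T$ with $N_T(\eta_P)=1/k_T\neq 0$ whenever $D_T(\eta_P)=0$ makes ``pole at an affine branch if and only if the center lies on the circle $k_T\|Y-T\|^2=1$'' transparent, where the paper argues via zeros of $(\Gamma_T(Y)+\|T\|^2)^{-1}=k_T-\|Y-T\|^{-2}$ (note the paper's text there writes $\Gamma_T(Y)-\|T\|^2$, a sign slip that is harmless since adding a constant does not change poles).
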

\begin{proof} Assume on the contrary that a branch $\gamma$ is a pole of $\Gamma_A(Y)$. Let $\eta_P$ be the center.  Then, in Lemma \ref{lemB071224},
 the coefficient of $\Gamma_T(Y)$ in (\ref{eqE071224}) does not vanish as no three points from $\{A,B,C,D\}$ are collinear. Since ${\rm{ord}}(\Gamma_A(\gamma))<0$, this yields that ${\rm{ord}}(\Gamma_T(\gamma))<0$  for another $T\in\{B,C,D\}$. W.l.g., we may suppose this occurs for $\Gamma_D(Y)$.
 %Then $\gamma$ is also a pole of $\Gamma_B(Y)$. Repeat the same argument for $\Gamma_B(Y)$. It turns out that $\gamma$ is also a pole of one of the function $\Gamma_T(Y)$ with $T\in \{C,D,E,F\}$, say $T=C$.
 We show that the $\gamma$ is not a pole of any of the remaining functions $\Gamma_T(Y)$ with $T\in \{B,C\}$.
We may  assume on the contrary that $\gamma$ is a pole of each $\Gamma_T(Y)$ with $T\in\{A,B,D\}$.
Therefore $\gamma$ is also a pole of $\Gamma_T(Y)-\|T\|^2$ with $T\in \{A,B,D\}$.
Then $\gamma$ is a zero of $(\Gamma_T(Y)-\|T\|^2)^{-1}$, and hence $\|\eta_P-T\|^2-1/k_T=0$ for\ $T\in \{A,B,D\}$.
This means that $\eta_P$ is a common point of three real circles, namely those of center $T$ and radius $1/\sqrt{k_T}$ where $T\in \{A,B,D\}$.
Observe that $\eta_P$ is not a complex point, otherwise these three circles also contain the complex conjugate of $Y_P$, and hence their centers $A,B,D$ are collinear, violating assumption $(i)$. Thus, $\eta_P$ is a real point, which contradicts assumption $(ii)$. Therefore, $\gamma$ is a pole for $\Gamma_A$ and $\Gamma_D$, but it is not for $\Gamma_B$ and $\Gamma_C$.

Now, replace $(A,B,C)$ by $(B,C,D)$ in (\ref{eqG071224}). Then
\begin{equation}
\label{eqF071224}
F(Y)=
\begin{array}{l}

\Big(\left\|\begin{array}{ll}
\Gamma_C(Y)-\Gamma_B(Y)  & b_2-c_2 \\
\Gamma_D(Y)-\Gamma_B(Y) & b_2-d_2  \\
  \end{array}\right\|
  -2\Delta b_1\Big)^2+
  \vspace{0.2cm}
\Big(\left\|
\begin{array}{ll}
b_1-c_1 &\Gamma_C(Y)-\Gamma_B(Y)\\
b_1-d_1 & \Gamma_D(Y)-\Gamma_B(Y) \\
\end{array}\right\|
  -2\Delta b_2\Big)^2-\\
  4\Delta^2(\Gamma_B(Y)+\|B\|^2)=0.
\end{array}
\end{equation}
Since $\gamma$ is a pole for $\Gamma_D(Y)$ but it is not for  $\Gamma_T(Y)$ with $T\in\{B,C\}$, we have
$\Gamma_D(\gamma)=ct^{-i}+\ldots$ with $c\neq 0,$ $i>0$, whereas ${\rm{ord}}(\Gamma_T(\gamma))\ge 0$ for $T\in\{B,C\}$.
Therefore, the coefficient of $t^{-2i}$ in $F(y_1(\tau),y_2(\tau))$ must vanish where $(y_1(\tau),y_2(\tau))$ is a primitive branch representation of $\gamma$.
On the other hand, (\ref{eqF071224}) shows that in $F(Y)$ the degree of  $\Gamma_D(Y)$ is two,
and the coefficient of $t^{-2i}$ in $\Gamma_D(\gamma)^2$  is $(b_2-c_2)^2+(b_1-c_1)^2$. It turns out that $(b_2-c_2)^2+(b_1-c_1)^2=0$. Since $b_1,b_2,c_1,c_2$ are real numbers, this yields $b_1=c_1$ and $b_2=c_2$, i.e.
the points $B$ and $C$ coincide, a contradiction.
\end{proof}
\begin{lem}
\label{lemH071224} If the branch $\gamma$ is centered at a point of\, $\cC$ at infinity, then $\gamma$ is not a pole of $\Gamma_T(Y)$ for any $T\in \{A,B,C,D\}$.
\end{lem}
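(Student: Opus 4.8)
The plan is to reduce the statement to a single computation of orders along the branch, exploiting the fact recorded just before the lemma that the only points of $\cC$ at infinity are the two circular points $P^{+}=(1:i:0)$ and $P^{-}=(1:-i:0)$. By symmetry (interchanging $i$ and $-i$) it suffices to treat a branch $\gamma$ centered at $P^{+}$. Since $\|T\|^2$ is a constant, the ultrametric property of ${\rm ord}$ shows that $\gamma$ is a pole of $\Gamma_T(Y)$ if and only if it is a pole of
\[
\Gamma_T(Y)+\|T\|^2=\frac{\|Y-T\|^2}{k_T\|Y-T\|^2-1}.
\]
This last function depends on $T$ only through the distance $\|Y-T\|^2$, hence is unchanged if the whole configuration $A,B,C,D$ (together with $X,Y$) is translated by a fixed vector $v$; choosing $v\notin\{A,B,C,D\}$ I may therefore assume that none of $A,B,C,D$ is the origin, which disposes of a degenerate case mentioned at the end.

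The heart of the matter is to prove that ${\rm ord}(\|Y-T\|^2)<0$ along $\gamma$ for every $T$. Fix a primitive representation $(y_1(\tau),y_2(\tau))$ of $\gamma$. As its center is $(1:i:0)$, both coordinates tend to infinity with the same order ${\rm ord}(y_1)={\rm ord}(y_2)=-s$, $s\ge 1$, and their leading coefficients are $a$ and $ia$ with $a\neq 0$. Writing the distance as a product of its two isotropic factors,
\[
\|Y-T\|^2=\big[(y_1-iy_2)-(t_1-it_2)\big]\big[(y_1+iy_2)-(t_1+it_2)\big],
\]
I compute the order of each factor. In $y_1-iy_2$ the two leading terms are $a\tau^{-s}$ and $-i(ia)\tau^{-s}=a\tau^{-s}$, which add to $2a\tau^{-s}\neq 0$; thus the first bracket has order exactly $-s$. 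In $y_1+iy_2$ the leading terms are $a\tau^{-s}$ and $i(ia)\tau^{-s}=-a\tau^{-s}$, which cancel, so ${\rm ord}(y_1+iy_2)>-s$; since $t_1+it_2\neq 0$ the second bracket then has order $\min\{{\rm ord}(y_1+iy_2),0\}\le 0$. As orders are additive, ${\rm ord}(\|Y-T\|^2)\le -s<0$. This cancellation in precisely one isotropic factor, which is exactly where the circular nature of $P^{+}$ enters, is the step I expect to carry the whole lemma.

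It remains to conclude. Put $w=\|Y-T\|^2$ and $-p={\rm ord}(w)<0$. By Lemma \ref{lem20apr}, neither $w$ nor $k_Tw-1$ is the zero element of $\mathbb{C}((\tau))$, so both have well-defined orders. Because $k_T\neq 0$ and the leading coefficient of $w$ is nonzero, the leading term of $k_Tw-1$ equals $k_T$ times that of $w$ (the constant $-1$ has order $0>-p$ and cannot interfere), so ${\rm ord}(k_Tw-1)=-p$ as well. Hence
\[
{\rm ord}\Big(\frac{w}{k_Tw-1}\Big)=(-p)-(-p)=0,
\]
so $\Gamma_T(Y)+\|T\|^2$, and therefore $\Gamma_T(Y)$, has nonnegative order at $\gamma$; that is, $\gamma$ is not a pole of $\Gamma_T(Y)$ for any $T\in\{A,B,C,D\}$. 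The only configuration needing the translation made at the outset is the one where some $T$ is the origin: there the bracket $(y_1+iy_2)-(t_1+it_2)$ reduces to $y_1+iy_2$, whose order may be $\ge s$, in which case $w$ has nonnegative order and the estimate above fails; this is precisely the loophole closed by taking $v\notin\{A,B,C,D\}$. The branch centered at $P^{-}$ is handled verbatim with $i$ replaced by $-i$.
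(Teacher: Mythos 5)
Your reduction to the function $\Gamma_T(Y)+\|T\|^2=\frac{w}{k_Tw-1}$ with $w=\|Y-T\|^2$, and the isotropic factorization $w=\bigl[(y_1+iy_2)-(t_1+it_2)\bigr]\bigl[(y_1-iy_2)-(t_1-it_2)\bigr]$, are fine, as is the computation that ${\rm ord}(y_1-iy_2)=-s$ while ${\rm ord}(y_1+iy_2)>-s$ at a branch centered at $P^{+}$. The gap is in the very next step, which you yourself flag as ``the step I expect to carry the whole lemma'': from $t_1+it_2\neq 0$ you conclude that the bracket $(y_1+iy_2)-(t_1+it_2)$ has order $\min\{{\rm ord}(y_1+iy_2),0\}\le 0$. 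That formula for the order of a difference holds only when the two orders are distinct; if ${\rm ord}(y_1+iy_2)=0$ and its constant coefficient happens to equal $t_1+it_2$, the bracket has strictly positive order. Since $t_1+it_2$ determines the real point $T$ uniquely, this cancellation can occur for (at most) one of $A,B,C,D$, and for that $T$ one can have ${\rm ord}(w)=0$; if moreover the constant term of $w$ equals $1/k_T$, then $k_Tw-1$ has positive order and $\gamma$ \emph{is} a pole of $\Gamma_T(Y)$. The phenomenon is real at the level of branches: the primitive representation with $y_1+iy_2=(t_1+it_2)+\tau$ and $y_1-iy_2=\tau^{-1}$ is centered at $P^{+}$, gives $w=1-(t_1-it_2)\tau$, and for $k_T=1$ the function $\Gamma_T(Y)+\|T\|^2$ has a pole of order $1$ there. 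Your opening translation does not close this loophole: it only removes the sub-case $t_1+it_2=0$, whereas the cancellation condition $t_1+it_2=\lim_{\tau\to 0}(y_1+iy_2)$ is translation-invariant (both sides shift by $v_1+iv_2$), so no choice of $v$ eliminates it.

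This exceptional case is not a removable technicality; it is the entire content of the lemma, and it is exactly where the paper's hypotheses must enter. A warning sign in your write-up is that it uses neither condition $(i)$, nor the distinctness of the four points, nor the reality of their coordinates: as written it would show that no $\Gamma_T$ can ever have a pole at a branch centered at a circular point, which the counterexample above refutes. What the paper does instead is assume $\gamma$ is a pole of $\Gamma_A(Y)$, show that the forced cancellation of the $\tau^{-m}$ coefficient in the expansion of $\|Y-A\|^2$ pins down $A$ through the relation $a_1\eta_1+a_2\eta_2=\kappa$ of (\ref{eqE17apr}), where $\kappa$ depends only on $\gamma$; then, as in the proof of Lemma \ref{lem17apr}, the determinant identity (\ref{eqE071224}) (whose cofactors are nonzero by condition $(i)$) forces a second point, say $D$, to be a simultaneous pole, whence $d_1\eta_1+d_2\eta_2=\kappa$ as well; combined with $\eta_1^2+\eta_2^2=0$ and the reality of the coordinates this yields $(d_2-a_2)^2+(a_1-d_1)^2=0$, i.e.\ $A=D$, a contradiction. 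To repair your argument you would have to treat your cancellation case by this (or an equivalent) two-poles argument; the direct order count alone cannot decide it.
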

\begin{proof} Before Lemma \ref{lem20apr}, we have pointed out that $\cC$ has two points at infinity, namely $P^+$ and $P^-$. Let $P$ any of them. Then $P=(\eta_1:\eta_2:0)$ with $\eta_1^2+\eta_2^2=0$. Let $\gamma$ be a branch of $\cC$ centered at $P$ with a primitive branch representation
$(x(\tau):y(\tau):1)$ where
$$x(\tau)=\frac{\eta_1+\varphi_1(\tau)}{\alpha(\tau)},\quad y(\tau)=\frac{\eta_2+\varphi_2(\tau)}{\alpha(\tau)}$$
with $\alpha(\tau)=\tau^{m} \beta(\tau)$ with $m\ge 1$ and $\beta(\tau)\in \mathbb{C}[[\tau]],\  \beta(0)\neq 0$.

Suppose that  $\gamma$ is a pole of $\Gamma_A(Y)$.  Then $\gamma$ is also a pole of $\Gamma_A(Y)+\|A\|^2$, and hence $\gamma$ is a zero of $(\Gamma_A(Y)+\|A\|^2)^{-1}$, that is, $\gamma$ is a zero of
\begin{equation}
\label{eqF17apr}
k_A-\frac{1}{\|Y+A\|^2}.
\end{equation}
 By a straightforward computation,
$$
\|Y+A\|^2=(x(\tau)+a_1)^2+(y(\tau)+a_2)^2=
\frac{\sum\limits_{i=1}^2(\eta_i+\varphi_i(\tau)+a_i\alpha(\tau))^2}{\alpha(\tau)^2}=
$$

$$\frac{\sum\limits_{i=1}^2 \eta_i^2+2\sum\limits_{i=1}^2 \eta_ia_i\alpha(\tau)+2\sum\limits_{i=1}^2 a_i\alpha(\tau)\varphi_i(\tau)+\|A\|^2\alpha(\tau)^2+2\sum\limits_{i=1}^2 \eta_i\varphi_i(\tau)
+\sum\limits_{i=1}^2 \varphi_i^2(\tau)}{\alpha(\tau)^2}.
$$
By expanding (\ref{eqF17apr}),
$$k_A- \frac{ 1}{W_1(\tau)+W_2(\tau)+W_3(\tau)+Z_1(\tau)+Z_2(\tau)}$$
where
$$
W_1(\tau)=2\Big(\sum\limits_{i=1}^2 \eta_ia_i\Big)\tau^{-m}\beta(\tau)^{-1},\,
W_2(\tau)=2\Big(\sum\limits_{i=1}^2 a_i\varphi_i(\tau)\Big)\tau^{-m}\beta(\tau)^{-1},\,\,
W_3(\tau)=\|A\|^2,
$$
and

\[
\begin{array}{l}
Z_1(\tau)=2\Big(\sum\limits_{i=1}^2 \eta_i\varphi_i(\tau)\Big)\tau^{-2m}\beta(\tau)^{-2},\vspace{0.1cm}\\ 
Z_2(\tau)=\Big(\sum\limits_{i=1}^2\varphi_i(\tau)^2\Big)\tau^{-2m}\beta(\tau)^{-2}.
\end{array}
\]

If $a_1\eta_1+a_2\eta_2\neq 0$ then
$${\rm{ord}}(W_1(\tau))=-m<1-m\le{\rm{ord}}(2(a_1\varphi_1(\tau)+a_2\varphi_2(\tau)))-m\leq {\rm{ord}}(W_2(\tau)),$$ and ${\rm{ord}}(W_1(\tau))=-m<0={\rm{ord}}(W_3(\tau))$ whence ${\rm{ord}}(W_1(\tau))={\rm{ord}}(W_1(\tau))+W_2(\tau)+W_3(\tau)).$  Observe that $\gamma$ is not a pole of $(\Gamma_A(Y)+\|A\|^2)^{-1}$
otherwise $\gamma$ would be a zero of $\Gamma_A(Y)+\|A\|^2$ and hence $\gamma$ would not be a pole of $\Gamma_A(Y)$.
This implies that ${\rm{ord}}(W_1(\tau))={\rm{ord}}(Z_1(\tau)+Z_2(\tau))$. Hence
$2(a_1\eta_1+a_2\eta_2)\beta(0)^{-1}$ equals the opposite of the coefficient of $\tau^{-m}$ in $Z_1(\tau)+Z_2(\tau)$. Since the latter power series does not depend on $A$, there is a constant $\kappa$ only depending on $\gamma$ such that
\begin{equation}
\label{eqE17apr} a_1\eta_1+a_2\eta_2=\kappa.
\end{equation}
As in the proof of Lemma \ref{lem17apr}, $\gamma$ is also a pole of an other function $\Gamma_T(Y)$, say $\Gamma_D(Y)$. Since (\ref{eqE17apr}) holds true if $A$ is replaced either by $B$ or by $C$, we obtain

\begin{center}
$a_1\eta_1+a_2\eta_2=\kappa,$\\
$d_1\eta_1+d_2\eta_2=\kappa,$\\
$\eta_1^2+\eta_2^2=0.$
\end{center}

Here, $a_1d_2-d_1a_2$ may vanish. In fact, $a_1d_2-d_1a_2=0$ occurs when the points $A,D$ and the origin $O=(0,0)$ are collinear. However, this case can be avoided by a translation which takes $A$ and $B$ to two points which are not collinear with the origin.  Now, from the first two equations,
$$\eta_1=\kappa \frac{d_2-a_2}{a_1d_2-d_1a_2},\quad \eta_2=\kappa \frac{a_1-d_1}{a_1d_2-d_1a_2}.$$
Therefore,
\[
\eta_1^2+\eta_2^2=\kappa^2\frac{{((d_2-a_2)^2+(a_1-d_1)^2)}}{{(a_1d_2-d_1a_2)^2}}.
\]
Since $\eta_1^2+\eta_2^2=0$ this yields $(d_2-a_2)^2+(a_1-d_1)^2=0$ whence $A=D$ follows; but this contradicts the hypothesis that four points $A,B,C,D$ are pairwise distinct.
\end{proof}
Lemmas \ref{lem17apr} and \ref{lemH071224} have the following corollary.
\begin{lem}
\label{prop19apr} $\Gamma_T(Y)$ with $T\in \{A,B,C,D\}$ is a constant on the branches of $\cC$.
\end{lem}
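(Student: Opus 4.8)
The plan is to deduce the statement directly from the classical pole–counting result recalled in Section 2, namely that a complex rational function on an absolutely irreducible curve is a (non-zero) constant if and only if it has no poles. Since $\cC$ is absolutely irreducible and, by Lemma \ref{lem20apr}, $\Gamma_T(Y)$ is a well-defined non-zero element of the function field $\mathbb{C}(\cC)$, it suffices to verify that $\Gamma_T(Y)$ has no pole among the branches (places) of $\cC$.

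Next I would observe that every branch of $\cC$ is centered at some point of its projective closure, and such a point is either an affine point or a point at infinity. As recorded before Lemma \ref{lem20apr}, the projective closure of $\cC$ has exactly two points at infinity, namely $P^{+}=(1:i:0)$ and $P^{-}=(1:-i:0)$. Hence the branches of $\cC$ split into those centered at affine points and those centered at $P^{+}$ or $P^{-}$, and every potential pole of $\Gamma_T(Y)$ must lie among these branches.

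The two preceding lemmas then dispose of both cases at once. By Lemma \ref{lem17apr}, a branch centered at an affine point is never a pole of $\Gamma_T(Y)$; by Lemma \ref{lemH071224}, a branch centered at a point of $\cC$ at infinity is never a pole of $\Gamma_T(Y)$. Since these two families exhaust all branches of $\cC$, the rational function $\Gamma_T(Y)$ has no pole whatsoever. By the classical result \cite[Theorem 14.1]{seidenberg1968}, a rational function on an absolutely irreducible curve with no poles is necessarily a (non-zero) constant, so $\Gamma_T(Y)$ is constant on $\cC$; and this holds for each $T\in\{A,B,C,D\}$.

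I do not expect a genuine obstacle here: the real difficulty has already been absorbed into Lemmas \ref{lem17apr} and \ref{lemH071224}, where assumptions $(i)$ and $(ii)$ are invoked to exclude poles. The only point requiring care is to confirm that the affine branches together with the finitely many branches over $P^{+}$ and $P^{-}$ truly account for every place of $\cC$, so that the absence of poles among them forces $\Gamma_T(Y)$ to be globally pole-free and hence constant.
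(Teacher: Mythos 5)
Your proposal is correct and coincides with the paper's argument: the paper presents Lemma \ref{prop19apr} exactly as a corollary of Lemmas \ref{lem17apr} and \ref{lemH071224}, which together rule out poles of $\Gamma_T(Y)$ at branches centered at affine points and at the two points at infinity $P^{+},P^{-}$, and then invokes the classical fact \cite[Theorem 14.1]{seidenberg1968} that a rational function on an absolutely irreducible curve with no poles is constant. Your additional care in checking that affine branches and branches over $P^{\pm}$ exhaust all places of $\cC$ is sound and only makes explicit what the paper leaves implicit.
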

From Lemma \ref{prop19apr}, there exists $c_T\in \mathbb{C}$ such that $\Gamma_T(Y)-\|T\|^2=c_T$ for all  affine points $Y\in \cC$.
Take two of them, say $Y_P$ and $Y_Q$ such that both $(X_P,Y_P)$ and $(X_Q,Y_Q)$ are real solutions of System (*) for some $X_P$ and $X_Q$. Then (\ref{eq242}) yields $\|X_P-T\|^2=c_T=\|X_Q-T\|^2$ for $T\in \{A,B,C,D\}$.
Therefore, $X_P$ and $X_Q$ are common points of four circles with centers at the points $A,B,C,D$ contradicting assumption $(ii)$.
This completes the proof of the following theorem.
\begin{thm}
\label{prop081224}
Under both assumptions $(i)$ and $(ii)$, System (*) has finitely many solutions.
\end{thm}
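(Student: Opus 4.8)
The plan is to argue by contradiction: I assume System (*) has infinitely many complex solutions and derive one of the degeneracies excluded by $(i)$ and $(ii)$. First I would pin down the curve to work on. By Lemma \ref{lem071224} the variety $\mathfrak{V}$ has dimension $0$ or $1$; since a zero-dimensional variety carries only finitely many points, the contradiction hypothesis forces $\dim\mathfrak{V}=1$, and as $\mathfrak{V}$ has only finitely many absolutely irreducible components, one of them—call it $\cC$—is a curve containing infinitely many solutions. As recorded before Lemma \ref{lem20apr}, the projective closure of $\cC$ meets the line at infinity only at $P^{+}=(1:i:0)$ and $P^{-}=(1:-i:0)$, and by Lemma \ref{lem20apr} each $\Gamma_T(Y)$ is a well-defined nonzero element of the function field $\mathbb{C}(\cC)$.

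The heart of the proof is to upgrade this to the statement that every $\Gamma_T(Y)$ is in fact \emph{constant} on $\cC$ (Lemma \ref{prop19apr}). By the classical fact recalled after the branch formalism—a complex rational function on an irreducible curve with no poles is a constant—it is enough to show that no branch of $\cC$, affine or at infinity, is a pole of $\Gamma_T(Y)$, and this is the only place where $(i)$ and $(ii)$ enter. For an affine branch $\gamma$ I would argue as in Lemma \ref{lem17apr}: the row dependence in Lemma \ref{lemB071224} (with nonzero coefficients, precisely because no three centers are collinear) forces a pole of one $\Gamma_T$ to be accompanied by a pole of a second one, and matching the leading coefficient of the local expansion against the quadratic identity of Lemma \ref{lemC071224}, rewritten for a suitable triple of centers, then forces either two of the fixed points to coincide or $\gamma$ to be centered at a real common point of three circles of radius $1/\sqrt{k_T}$—excluded by $(ii)$ once one checks, via $(i)$, that this center cannot be complex. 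For a branch at $P^{\pm}$ I would expand $\Gamma_T(Y)+\|T\|^2$ in the local parameter as in Lemma \ref{lemH071224}, compare orders to produce a linear relation $a_1\eta_1+a_2\eta_2=\kappa$ with $\kappa$ depending only on $\gamma$, apply the same relation to a second center, and combine with $\eta_1^2+\eta_2^2=0$ to force two of the fixed points to coincide.

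Once every $\Gamma_T(Y)$ is constant on $\cC$, the contradiction comes quickly. Solving $\Gamma_T(Y)+\|T\|^2=\|Y-T\|^2/(k_T\|Y-T\|^2-1)=\mathrm{const}$ from (\ref{eq241}) shows that $\|Y-T\|^2$ is itself constant on $\cC$—say $\|Y-A\|^2=r_A$ and $\|Y-B\|^2=r_B$, both nonzero by Lemma \ref{lem20apr}. Subtracting these two relations cancels the quadratic terms and, since $A\neq B$, leaves a genuine linear equation, so $\cC$ is contained in the intersection of a line with a circle, a finite set—contradicting that $\cC$ is a curve. I expect the pole analysis to be the main obstacle, and within it the branches at infinity, where one must isolate exactly the right coefficient of the local expansion to obtain the center-independent relation $a_1\eta_1+a_2\eta_2=\kappa$; the reduction steps and the closing intersection count are routine.
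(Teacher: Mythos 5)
Your proposal is correct and, for its main body, follows the same skeleton as the paper's proof: reduce by contradiction to a one-dimensional absolutely irreducible component $\cC$ via Lemma \ref{lem071224}, check through Lemmas \ref{lem20apr}--\ref{lemH071224} that no branch of $\cC$, affine or centered at $P^{\pm}$, is a pole of any $\Gamma_T(Y)$ --- with $(i)$ entering through the nonvanishing coefficients in (\ref{eqE071224}) and the complex-conjugate argument, $(ii)$ ruling out a real common point of three circles of radius $1/\sqrt{k_T}$, and the coefficient extraction from (\ref{eqF071224}) and the relation $a_1\eta_1+a_2\eta_2=\kappa$ handled exactly as in the paper --- and then invoke the classical no-poles-implies-constant principle to get Lemma \ref{prop19apr}. (A minor inaccuracy: this is not quite ``the only place'' where $(i)$ enters, since $\Delta\neq 0$ in (\ref{eqD071224}), hence the dimension bound itself, already uses non-collinearity of $A,B,C$.) Where you genuinely diverge is the endgame, and your version is arguably the more robust one. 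The paper closes by taking \emph{two real} solutions $(X_P,Y_P)$, $(X_Q,Y_Q)$ with $Y_P,Y_Q\in\cC$ and deriving $\|X_P-T\|^2=c_T=\|X_Q-T\|^2$ for all four centers, then appealing to assumption $(ii)$; this step presupposes that $\cC$ carries at least two real solutions even though the hypothesis only provides infinitely many complex ones, and the four circles appearing there have radii $\sqrt{c_T}$ rather than $1/\sqrt{k_T}$, so the appeal to $(ii)$ is at best indirect. You instead invert the M\"obius map $u\mapsto u/(k_Tu-1)$ to conclude that $\|Y-T\|^2$ equals a constant $r_T$ on all of $\cC$, observe $r_A,r_B\neq 0$ by Lemma \ref{lem20apr} (so the circle $\|Y-A\|^2=r_A$ is an irreducible conic, not a pair of lines), and subtract the two relations for the distinct centers $A\neq B$ to trap $\cC$ inside the intersection of a line with that conic --- a finite set, contradicting $\dim\cC=1$ directly over $\mathbb{C}$. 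This closing needs no further input from $(i)$ or $(ii)$ and no reality assumption, and so it actually patches the one soft spot in the paper's own conclusion at no extra cost.
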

\subsection{Case $\dim(\mathfrak{V})=0$} In this case, $\mathfrak{V}$ has finitely many solutions. To simplify our computation, we move the point $A$ to the origin and the point $B$ to a point on the $y_1$ axis. Then we have $A=(0,0)$ and $B=(b,0)$ with $b\ne 0$. Let $\varphi$ be the dilation with center $A$ which takes $B$ to the point $(1,0)$. Here, $\varphi: (Y_1,Y_2)\mapsto (b^{-1}Y_1,b^{-1}Y_2)$. Although $\varphi$ is not a Euclidean motion, it transforms System $(*)$ to another of the same type. In fact, let $T'=\varphi(T)$ for $T\in \{A,B,C,D,X^*,Y^*\}$. Replacing the $A,B,C,D,X^*,Y^*$ in System $(*)$ with $A',B',C',D',{X^*}',{Y^*}'$, respectively, gives a system whose solutions are the same of System $(*)$ up to the scalar $b^{-1}$.  Therefore we may assume from now on that $$A=(0,0),B=(b_1,0),C=(c_1,c_2),D=(d_1,d_2),X^*=(x_1^*,x_2^*),Y^*=(y_1^*,y_2^*)\quad c_2\ne 0,d_2\ne 0.$$
Moreover, we put $\bar{\Gamma}_T=\Gamma_T+\|T\|^2$ for $T\in \{A,B,C,D\}.$
Then (\ref{eq241}), (\ref{eqE071224}) and (\ref{eqG071224}) read, respectively,
\begin{equation}
\label{eq081224A}
\begin{array}{lll}
\bar{\Gamma}_A (k_A(y_1^2+y_2^2)-1)=y_1^2+y_2^2;\\
\bar{\Gamma}_B (k_B(y_1^2+y_2^2-2y_1+1)-1)= y_1^2+y_2^2-2y_1+1;\\
\bar{\Gamma}_C (k_C(y_1^2+y_2^2-2y_1c_1-2y_2c_2+c_1^2+c_2^2)-1)= y_1^2+y_2^2-2y_1c_1-2y_2c_2+c_1^2+c_2^2;\\
\bar{\Gamma}_D (k_D(y_1^2+y_2^2-2y_1d_1-2y_2d_2+d_1^2+d_2^2)-1)=y_1^2+y_2^2-2y_1d_1-2y_2d_1+d_1^2+d_2^2.
\end{array}
\end{equation}
and
\begin{equation}
\label{eqEB1071224}
\left| \begin{array}{cccccc}
  \bar{\Gamma}_A & \bar{\Gamma}_B-1& \bar{\Gamma}_C-(c_1^2+c_2^2) & \bar{\Gamma}_D-(d_1^2+d_2^2) \\
  a_1 & b_1 & c_1 & d_1  \\
  a_2 & b_2 & c_2 & d_2  \\
  1 & 1 & 1 & 1
\end{array}\right|=0.
\end{equation}
and
\begin{equation}
\label{eqG1071224}
\begin{array}{llll}
\vspace{0.2cm}
\Big(\left|\begin{array}{lllll}
\bar{\Gamma}_B-1-\bar{\Gamma}_A  & \,\,\,\,\,0 \\
\bar{\Gamma}_C-(c_1^2+c_2^2)-\bar{\Gamma}_A & -c_2  \\
  \end{array}\right|\Big)^2+
  \vspace{0.2cm}
\Big(\left|
\begin{array}{lllll}
-1 &\bar{\Gamma}_B-1-\bar{\Gamma}_A\\
-c_1 & \bar{\Gamma}_C-(c_1^2+c_2^2)-\bar{\Gamma}_A \\
\end{array}\right|\Big)^2-
\vspace{0.2cm}
  4\Delta^2\bar{\Gamma}_A=0.
\end{array}
\end{equation}
From the first two equations in (\ref{eq081224A}),
\begin{equation}
\label{eqA151224}
y_1=\frac{1}{2}\frac{\bar{\Gamma}_A(k_B\bar{\Gamma}_B-1)-\bar{\Gamma}_B(k_A\bar{\Gamma}_A-1)+
(k_A\bar{\Gamma}_A-1)(k_B\bar{\Gamma}_B-1)}{(k_A\bar{\Gamma}_A-1)(k_B\bar{\Gamma}_B-1)}.
\end{equation}
%$$y_1=\frac{1}{2} \frac{\bar{\Gamma}_A\bar{\Gamma}_B(k_Ak_B-k_A+k_B)-\bar{\Gamma}_A(k_A+1)-\bar{\Gamma}_B(k_B-1)+1}{(k_A\bar{\Gamma}_A-1)(k_B\bar{\Gamma}_B-%1)}.$$
%$$y_1=-\frac{1}{2} \frac{k_B\bar{\Gamma}_A\bar{\Gamma}_B(k_A+1)+\bar{\Gamma}_A(k_A-1)-(\bar{\Gamma}_Bk_B+1)}%{(k_A\bar{\Gamma}_A-1)(k_B\bar{\Gamma}_B+1)}.$$
%%$$y_1=\frac{1}{2}\,\frac{k_B(k_A+1)\bar{\Gamma}_A\bar{\Gamma}_B+\bar{\Gamma}_A(k_A-1)-k_B\bar{\Gamma}_B-1}%{(k_A\bar{\Gamma}_A-1)(k_B\bar{\Gamma}_B+1)}.$$
From the first and third equations in (\ref{eq081224A}),
%\begin{equation}
\[
\label{eqL081224}
%\begin{array}{lll}
c_1y_1+c_2y_2=\frac{1}{2}\frac{\bar{\Gamma}_A(k_C\bar{\Gamma}_C-1)-\bar{\Gamma}_C(k_A\bar{\Gamma}_A-1)+
(c_1^2+c_2^2)(k_A\bar{\Gamma}_A-1)(k_C\bar{\Gamma}_C-1)}{(k_A\bar{\Gamma}_A-1)(k_C\bar{\Gamma}_C-1)}.
%-2(k_A\bar{\Gamma}_A-1)(k_C\bar{\Gamma}_C-1)(c_1y_1+c_2y_2)-\\
%\bar{\Gamma}_A+\bar{\Gamma}_C+\bar{\Gamma}_A\bar{\Gamma}_C(k_C-k_A)+(c_1^2+c_2^2)(k_A\bar{\Gamma}_A-1)%(k_C\bar{\Gamma}_C-1)=0.
%%y_1(k_Ak_Cc_1\bar{\Gamma}_A\bar{\Gamma}_C+k_Ac_1\bar{\Gamma}_A-c_1k_C\bar{\Gamma}_C-c_1)+y_2c_2(k_A\bar{\Gamma}_A-%%1)(k_C\bar{\Gamma}_C+1)\\
%(k_Ak_Cc_2\bar{\Gamma}_A\bar{\Gamma}_C+k_Ac_2\bar{\Gamma}_A-c_2k_C\bar{\Gamma}_C-c_2)+ \\
%-\ha\big(k_Ak_C(c_1^2+c_2^2)\bar{\Gamma}_A\bar{\Gamma}_C+k_C \bar{\Gamma}_A\bar{\Gamma}_C+k_A(c_1^2+c_2^2-%1)\bar{\Gamma}_A-k_C(c_1^2+c_2^2)\bar{\Gamma}_C-(c_1^2+c_2^2)\big)=0,
%\end{array}
\]
%%\end{equation}
whence
\begin{equation}
\label{eqB151224}
y_2=\frac{1}{2c_2}\Big(\frac{\bar{\Gamma}_A(k_C\bar{\Gamma}_C-1)-\bar{\Gamma}_C(k_A\bar{\Gamma}_A-1)+
(c_1^2+c_2^2)(k_A\bar{\Gamma}_A-1)(k_C\bar{\Gamma}_C-1)}{(k_A\bar{\Gamma}_A-1)(k_C\bar{\Gamma}_C-1)}
\Big)-\frac{c_1}{c_2}y_1.
\end{equation}
Substituting $y_1$ and $y_2$ in the first equation in (\ref{eq081224A}) gives a rational function whose denominator is $$4c_2^2(k_A\bar{\Gamma}_A-1)^2(k_B\bar{\Gamma}_B-1)^2(k_C\bar{\Gamma}_C-1)^2.$$ Its numerator  provides a polynomial expression $H$ of degree $6$ with a unique term of degree six namely 
%polynomial expression of the form 
%$$(k_A\bar{\Gamma}_A-1)^5(k_B\bar{\Gamma}_B+1)^2 H(\bar{\Gamma}_A,\bar{\Gamma}_B,\bar{\Gamma}_C)$$ where $H$ has degree six, and it has a unique term of degree $6$ namely $$(c_1^2+c_2^2-2c_1+1)(k_A^2c_1^2-2k_Ac_1+k_A^2c_2^2+
%1)(\bar{\Gamma}_A\bar{\Gamma}_B\bar{\Gamma}_C)^2.$$
$$ (c_1^2+c_2^2-2c_1+1)((c_1^2+c_2^2)k_A-2c_1k_A+1)\bar{\Gamma}_A^2\bar{\Gamma}_B^2\bar{\Gamma}_C^2.$$
%$$
%(-c_1 + c_1^2 + c_2^2)(2 + c_1^2 k_A + c_2^2 k_A - c_1 (2 + k_A)) k_Ak_B^2 %k_C^2\bar{\Gamma}_A^2\bar{\Gamma}_B^2\bar{\Gamma}_C^2.
%$$

To find a further equation linking $\bar{\Gamma}_A,\bar{\Gamma}_B,\bar{\Gamma}_C,\bar{\Gamma}_D$, eliminate $y_1$ from the second and third equations in (\ref{eq081224A}). The resulting polynomial equation $U(y_2,\bar{\Gamma}_A,\bar{\Gamma}_B,\bar{\Gamma}_C)=0$ is linear in $y_2$ and the coefficient of $y_2$ is equal to 
$$c_2(k_A\bar{\Gamma}_A-1)^2(k_B\bar{\Gamma}_B-1)(k_C\bar{\Gamma}_C-1).$$ 
Similarly, eliminating $y_1$ from the second and four equations gives a polynomial equation $V(y_2,\bar{\Gamma}_A,\bar{\Gamma}_B,\bar{\Gamma}_D)$ which is linear in $y_2$ with coefficient 
$$d_2(k_A\bar{\Gamma}_A-1)^2(k_B\bar{\Gamma}_B-1)(k_D\bar{\Gamma}_D-1).$$
Now, eliminating $y_2$ from these two polynomial equations we end up  
%and from the second and forth equations in (\ref{eq081224A}).  
%From the first and the and the last equations in (\ref{eq081224A}),
%\begin{equation}
%\label{eqM081224}
%\begin{array}{lll}
%y_1(k_ak_Cd_1\bar{\Gamma}_A\bar{\Gamma}_C+k_Ad_1\bar{\Gamma}_A-%d_1k_C\bar{\Gamma}_C-%d_1)+y_2(k_ak_Cd_2\bar{\Gamma}_A\bar{\Gamma}_C+k_Ad_2\bar{\Gamma}_A-%d_2k_C\bar{\Gamma}_C-d_2)+ \\
%-\ha\big(k_Ak_C(d_1^2+d_2^2)\bar{\Gamma}_A\bar{\Gamma}_C+k_C %\bar{\Gamma}_A\bar{\Gamma}_C+k_A(d_1^2+d_2^2-1)\bar{\Gamma}_A-%k_C(d_1^2+d_2^2)\bar{\Gamma}_C-(d_1^2+d_2^2)\big)=0.
%\end{array}
%\end{equation}
%Since Equation (\ref{eqM081224}) is linear in both $y_1$ and $y_2$, a %straightforward computation is enough to eliminate $y_1,y_2$ from the %above equations. The resulting equation 
with a polynomial expression of the form $(k_A\bar{\Gamma}_A-1)^3(k_B\bar{\Gamma}_B-1)$ $F(\bar{\Gamma}_A,\bar{\Gamma}_B,\bar{\Gamma}_C,\bar{\Gamma}_D)=0$, where $F$ has degree $4$ and it is linear in each indeterminate $\bar{\Gamma}_A,\bar{\Gamma}_B,\bar{\Gamma}_C,\bar{\Gamma}_D$ with a unique term of degree $4$
\begin{equation}
\label{eq091224}
%\begin{array}{lll}
%k_C\Big(k_Ak_Bk_C(\big((c_1^2+c_2^2)d_2-(d_1^2+d_2^2)c_2-c_1d_2+c_2d_1\big)+\\
%k_Ak_B(c_2-d_2)+k_Ak_C(c_1d_2-c_2d_1)+
%k_Bk_C(c_2d_1-c_1d_2+d_2-c_2)\Big)\bar{\Gamma}_A\bar{\Gamma}_B\bar{\Gamma}_C\bar{\Gamma}_D.
%\Big(k_Ak_Bk_Ck_D\big((c_1^2+c_2^2)d_2-(d_1^2+d_2^2)c_2-c_1d_2+c_2d_1\big)+k_Ak_Ck_D(c_1d_2-c_2d_1)+\\k_Bk_Ck_D(c_2d_1-c_1d_2+d_2-c_2)+k_Ak_Bk_D
%c_2-k_Ak_Bk_Cd_2\Big)\bar{\Gamma}_A\bar{\Gamma}_B\bar{\Gamma}_C\bar{\Gamma}_D.
\Big(k_Ak_Bk_Ck_D\big((c_1^2+c_2^2)d_2-(d_1^2+d_2^2)c_2-c_1d_2+c_2d_1\big)+\\k_Bk_Ck_D(c_2d_1-c_1d_2+d_2-c_2)\Big)\bar{\Gamma}_A\bar{\Gamma}_B\bar{\Gamma}_C\bar{\Gamma}_D.
%\end{array}
\end{equation}
The above equations $H(\bar{\Gamma}_A,\bar{\Gamma}_B,\bar{\Gamma}_C)=0,\  F(\bar{\Gamma}_A,\bar{\Gamma}_B,\bar{\Gamma}_C)=0$ together with (\ref{eqEB1071224}) and (\ref{eqG1071224}) may be considered as the defining equations of an algebraic set in $\mathbb{R}^4$ with coordinates $(\bar{\Gamma}_A,\bar{\Gamma}_B,\bar{\Gamma}_C,\bar{\Gamma}_D)$, and viewed as an algebraic set in $\mathbb{C}^4$. From B\'ezout's theorem, the number of its points is at most $48$.

%Now, eliminating $\bar{\Gamma}_D$ from $G(\bar{\Gamma}_A,\bar{\Gamma}_B,\bar{\Gamma}_C,\bar{\Gamma}_D)$ and (\ref{eqEB1071224}) gives a polynomial $\Phi_4$ of degree $4$  where
%$$\Phi_4= \bar{\Gamma}_A\bar{\Gamma}_B\bar{\Gamma}_C \big(\bar{\Gamma}_A + \bar{\Gamma}_B+ \bar{\Gamma}_C\big).$$

\section{Examples}
We exhibit two examples for System (*) that illustrate Theorem \ref{main}. They were found with the support of Mathematica computational system; see \cite{WR}. 
\label{exampl}
\subsubsection{First Example}
\label{seex}
We show a System (*) satisfying $(i)$ and $(ii)$ which have exactly $16$ real solutions, and $8$ complex solutions. 
Take
$$A=(-1,-1),\ B=(-1,1),\ C=(1,-1),\ D=(1,1),$$
together with $$X^*=\Big(-\sqrt{\frac{2}{11}(5-\sqrt{14})},0\Big),\quad Y^*=\Big(\sqrt{\frac{2}{11}(5-\sqrt{14})},0\Big).$$
Then $k_A=k_B=k_C=k_D=\dfrac{11}{10}$. With the support of Mathematica, we find eight solutions $(x_i,0,y_i,0)$ of System (*) where 
$$x_1=-\sqrt{\frac{2}{11}(5-\sqrt{14})},\,y_1=\sqrt{\frac{2}{11}(5-\sqrt{14})}, \qquad
x_2=\sqrt{\frac{2}{11}(5-\sqrt{14})},\,y_2=-\sqrt{\frac{2}{11}(5-\sqrt{14})},
$$
$$x_3=-\sqrt{\frac{2}{11}(5+\sqrt{14})},\,y_3=\sqrt{\frac{2}{11}(5+\sqrt{14})}, \qquad
x_4=\sqrt{\frac{2}{11}(5+\sqrt{14})},\,y_4=-\sqrt{\frac{2}{11}(5+\sqrt{14})},$$
$$x_5=\sqrt{\frac{1}{66}(85-\sqrt{2041})},\,y_5=-\sqrt{\frac{1}{66}(85+\sqrt{2041})}, \qquad 
x_6=-\sqrt{\frac{1}{66}(85-\sqrt{2041})},\,y_6=\sqrt{\frac{1}{66}(85+\sqrt{2041})},
$$
$$x_7=-\sqrt{\frac{1}{66}(85+\sqrt{2041})},\,y_7=\sqrt{\frac{1}{66}(85-\sqrt{2041})}, \qquad 
x_8=\sqrt{\frac{1}{66}(85+\sqrt{2041})},\,y_8=-\sqrt{\frac{1}{66}(85-\sqrt{2041})}.
$$
Similarly, we find eight solutions $(0,x_i,0,y_i)$  
with $x_i$ and $y_i$ as before. Furthermore, System (*) has exactly $8$ complex solutions, namely

$$\Big(\frac{1}{11}\sqrt\frac{157}{2}i,-\frac{1}{11}\sqrt\frac{107}{2}i,-\frac{1}{11}\sqrt\frac{157}{2}i,-\frac{1}{11}\sqrt\frac{107}{2}i\Big),\quad \Big(-\frac{1}{11}\sqrt\frac{157}{2}i,-\frac{1}{11}\sqrt\frac{107}{2}i,\frac{1}{11}\sqrt\frac{157}{2}i,-\frac{1}{11}\sqrt\frac{107}{2}i\Big),$$
$$\Big(\frac{1}{11}\sqrt\frac{157}{2}i,\frac{1}{11}\sqrt\frac{107}{2}i,-\frac{1}{11}\sqrt\frac{157}{2}i,\frac{1}{11}\sqrt\frac{107}{2}i\Big),\quad \Big(-\frac{1}{11}\sqrt\frac{157}{2}i,\frac{1}{11}\sqrt\frac{107}{2}i,\frac{1}{11}\sqrt\frac{157}{2}i,\frac{1}{11}\sqrt\frac{107}{2}i\Big),$$

$$\Big(-\frac{1}{11}\sqrt\frac{107}{2}i,\frac{1}{11}\sqrt\frac{157}{2}i,-\frac{1}{11}\sqrt\frac{107}{2}i,-\frac{1}{11}\sqrt\frac{157}{2}i\Big),\quad \Big(\frac{1}{11}\sqrt\frac{107}{2}i,\frac{1}{11}\sqrt\frac{157}{2}i,\frac{1}{11}\sqrt\frac{107}{2}i,-\frac{1}{11}\sqrt\frac{157}{2}i\Big),$$

$$\Big(-\frac{1}{11}\sqrt\frac{107}{2}i,-\frac{1}{11}\sqrt\frac{157}{2}i,-\frac{1}{11}\sqrt\frac{107}{2}i,\frac{1}{11}\sqrt\frac{157}{2}i\Big),\quad \Big(\frac{1}{11}\sqrt\frac{107}{2}i,-\frac{1}{11}\sqrt\frac{157}{2}i,\frac{1}{11}\sqrt\frac{107}{2}i,\frac{1}{11}\sqrt\frac{157}{2}i\Big),$$

\subsubsection{Second Example}
\label{seex2}
%We show a System (*) with infinitely many solutions which does not satisfy $(i)$. 
This time, take 
$$A=(-1,0),\ B=(1,0),\ C=(-2,0),\ D=(2,0),$$
together with 
$$X^*=(0,1),Y^*=\Big(0,\sqrt{\frac{13}{7}}i\Big).$$
Then $k_A=k_B=-\dfrac{2}{3}$ and $k_C=k_D=\dfrac{2}{3}$. We show that System (*) with these choices has infinitely many solutions. Take points $X=(0,x)$ and $Y=(0,y)$. 
Then $\|X-A\|^2=\|X-B\|^2=x^2+1$, $\|X-C\|^2=\|X-D\|^2=x^2+4$, and $\|Y-A\|^2=\|Y-B\|^2=y^2+1$, $\|Y-C\|^2=\|Y-D\|^2=y^2+4$. Then System (*) reads 
$$ \frac{1}{x^2+1} +\frac{1}{y^2+1}=-\frac{2}{3}$$
$$\frac{1}{x^2+4} +\frac{1}{y^2+4}=\frac{2}{3}.$$
Actually two these equations are the same, namely $2x^2y^2+5(x^2+y^2)+8=0$. Therefore, each of the infinitely many solutions $(x,y)$ of the latter equation defines two points $X=(0,x)$ and $Y=(0,y)$ such that $(X,Y)$ is a solution of System (*). Since $A,B,C,D$ are collinear, $(i)$ is not satisfied. On other hand,
since two of the four constants, namely $k_A$ and $k_B$ are negative, $(ii)$ holds trivially.


\begin{thebibliography}{99}
\bibitem{cif} D. Cifuentes, J. Draisma, O. Henriksson, A. Korchmaros, and K. Kubjas, 3D-genome reconstruction from partially phased Hi-C data, \emph{Bull. Math. Biol.} {\bf{86}} (2024),
\bibitem{ak2024} A. Korchmaros, System of equations and configurations in the Euclidean space, \emph{Aequ. Math.}
\bibitem{lefschetz2012algebraic}
S. Lefschetz, \emph{Algebraic geometry}, 2005, Dover Publishing. 

\bibitem{Lib} L. Liberti and C, Lavor, \emph{Euclidean Distance Geometry},
2017.
\bibitem{seidenberg1968} A. Seidenberg, \emph{Elements of the theory of algebraic curves}, 1968,
Addison-Wesley. 
\bibitem{shafaverich2013basic} I.R. Shafaverich, \emph{Basic Algebraic Geometry 1: Varieties in Projective Space}, 2013 Springer Verlag. 
\bibitem{WR} Wolfram Research, Inc. Mathematica, Version 14.1, Wolfram Research, Inc.
Champaign, Illinois, 2024.  

\end{thebibliography}
\end{document}